\journal{arXiv }
\def\NN{\mathcal N}
\def\N{\mathbb N}  
\def\R{\mathbb R} 
\def\T{\mathcal{T}}
\def\D{\mathcal{D}}
\def\E{\mathcal{E}}
\def\d{\text{\rm d}}
\def\m{\text{\rm m}}
\newtheorem{lemma}{Lemma}[section]
\newtheorem{theo}{Theorem}[section]
\newtheorem{Def}{Definition}[section]
\newtheorem{prop}{Proposition}[section]
\newcommand{\figref}[1]{Fig.~\ref{#1}}
\begin{document}

\begin{frontmatter}

\title{A corrected decoupled  scheme for chemotaxis models}


\author{M. Akhmouch\corref{}}

\author{M. Benzakour Amine 
\corref{mycorrespondingauthor}}
\ead{benzakouramine.m@ucd.ac.ma
}
\cortext[mycorrespondingauthor]{Corresponding author}


\address{ D\'epartement de Math\'ematiques, Facult\'e des sciences et techniques de F\`es, USMBA, B.P. 2202 F\`es, Maroc }

\numberwithin{equation}{section}
\begin{abstract}
The main purpose of this paper is to present a new corrected decoupled  scheme combined with a spatial finite volume method for chemotaxis models. First, we derive the scheme for a parabolic-elliptic chemotaxis model arising in embryology.  We then establish the existence and uniqueness of the numerical solution, and we prove that it converges to a corresponding weak solution for the studied model.  In the last section, several numerical tests are presented by applying our approach to a number of chemotaxis systems. The obtained numerical results demonstrate the  efficiency of the proposed scheme and its effectiveness to capture different forms of spatial patterns.

\end{abstract}
\begin{keyword}
Chemotaxis \sep Decoupled scheme \sep Correction term \sep  Time discretization
\MSC[2010] 65M08 \sep  65M12 \sep 92C17 .
\end{keyword}
\end{frontmatter}

\section{Introduction}
\label{s1}
Chemotaxis refers to a phenomenon that enables cells (or organisms) to migrate in response to a chemical signal. This process has sparked the interest of many scientists since it is encountered in several medical and   biological applications, such as bacteria aggregation, tumour growth, integumental patterns in animals  etc.

     In \cite{Oster1}, Oster and Murray discussed a cell-chemotaxis model involving motile cells that respond to a chemoattractant secreted by the cells themselves. In its dimensionless form, the model reads   

\begin{equation}
\left\{
\begin{aligned}
 &\partial_t u = \mu \Delta u - a\nabla \cdot (  \, u \nabla c),
 \\
&\partial_t c = \Delta c +\frac{u}{u+1} - c \,,
 \end{aligned}
 \right.
 \label{(1)}
\end{equation}
where $\mu$ and $a$ are positive constants, $u$ is the cell density and $c$ is the concentration of chemoattractant.

 The above system is based on the Keller-Segel  model \cite{Keller1}, which is the most popular model for chemotaxis. The migration of cells is assumed to be governed by Fickian diffusion and chemotaxis, and the mass of cells is conserved. The chemoattractant is assumed also to diffuse, but it increases with cell density in Michaelis-Menten way and undergoes decay through simple degradation.

In \cite{Murray1}, Murray \textit{et al.}  suggest that the presented  cell-chemotaxis model is an appropriate mechanism for the formation of stripe patterns on the dorsal integument of embryonic and hatchling alligators (\textit{Alligator mississippiensis}). These skin pigment patterns is associated with the density of melanocyte cells: Melanocytes are abundant in the regions where the black stripes appear, and are insufficient in the regions of the white stripes. The formation of these stripes is a result of a chemoattractant secretion. The system  \eqref{(1)} is known to produce propagating pattern of standings peaks and troughs in cell density in the case of one-dimensional space. This patterning process  was numerically and analytically investigated by Myerscough and Murray in \cite{Myerscough1}.

In this paper, we will first focus on the following parabolic-elliptic system

\begin{equation}
\left\{
\begin{aligned}
 &\partial_t u = \mu \Delta u - a\nabla \cdot (  \, u \nabla c)\quad \text{in}\ \Omega_T ,
 \\
&0= \Delta c +\frac{u}{u+1} - c \quad \text{in}\ \Omega_T,
 \end{aligned}
 \right.
 \label{(2)}
\end{equation}
with the homogeneous Neumann boundary and initial conditions
\begin{equation}
 \nabla u \cdot \nu=\nabla c \cdot \nu=0 \quad
\text{on} \ \partial \Omega \times (0,T_f),\quad u(.,0)=u_0   \quad \text{in} \ \Omega ,
\label{(3)}
\end{equation}
where $\Omega_T:=\Omega \times (0,T_f)$,  $\Omega \subset\R^2$ is an open bounded  polygonal subset, $T_f>0$ is a fixed time and $\nu$ denotes the outward unit-normal on the boundary $\partial \Omega$. The system  \eqref{(2)} is a simpler version of the original model  \eqref{(1)} in which the second equation of the system is elliptic, using the reasonable assumption that the  chemoattractant diffuses  much faster than  cells.

  In this work, we develop a decoupled finite volume scheme which can be applied to a class of chemotaxis models.  For the  convection-diffusion term, the approximation used is quite similar to  the hybrid scheme of Spalding \cite{Spalding1}. Concerning the time discretization, which is the main aim of this paper, it is developed such that the scheme  only requires to solve decoupled  systems, which excludes  fully implicit discretizations. We require also that the scheme converges  without needing to fulfill any CFL condition,  which is not the case of  the fully explicit schemes.  In the literature, a number of decoupled methods for the Keller-Segel model and its variants have been proposed (see, e.g., \cite{Saito2,Saito1,Strehl1,Andreianov1,Ibrahim1,Chamoun1,Akhmouch1,Akhmouch2}). In all these works, the time discretization is based on the classical backward Euler scheme with an explicit approximation of some terms to avoid coupling of the system. However, it is well known that the main drawback of this strategy is its lack of accuracy.  A more efficient approach will be presented in this work.
  
This paper provides also a convergence analysis of the proposed scheme applied to the system \eqref{(2)}--\eqref{(3)}. It is proved that the convergence of the approximate solution can be obtained  for any nonnegative initial cell density $u_0 \in L^2(\Omega)$. Our proof uses some techniques from \cite{Filbet1}, where a fully implicit upwind finite volume scheme is studied for the classical Keller-Segel model.

 The outline of this paper is as follows. In the next section, we present our corrected decoupled  finite volume scheme to approximate the solution of \eqref{(2)}--\eqref{(3)}.  In section 3, we prove the existence and uniqueness of the solution of the proposed scheme. Positivity preservation and mass conservation are also shown in this section. A priori estimates are given in Section 4. In Section 5, we use these estimates to prove  that the approximate solution converges to a weak solution of the studied model. In Section 6, we present some numerical tests  and compare the accuracy of our approach with that of more usual decoupled schemes. The paper ends with a conclusion.

\section{Presentation of the numerical scheme} 
\label{s2}
\subsection{Spatial discretization of $\Omega$, definitions and preliminaries}
  \label{s2.1}
We assume that $\Omega \subset\R^2$ is an open bounded  polygonal  subset. Following  Definition 9.1 in \cite{Eymard1}, we consider  an  admissible finite volume mesh of $\Omega$, denoted by $\T$.  This mesh is given by:

\begin{itemize}
\item[$\bullet$] A family  of control volumes which is commonly denoted by the same notation of the mesh $\T$. All control volumes are open and convex polygons.
\item[$\bullet$] A family $\E$ of edges , where the set of edges of any control volume $K\in \T$ is denoted by   $\E_K$. We denote also by $\sigma=K|L$ the  edge between $K$ and $L$ ($\sigma \in \E_K$ and $\sigma \notin \partial \Omega$).
 
\item[$\bullet$] A family of points $(x_K)_{K\in\T}$ such that $x_K \in \overline{K}$ (for all $K\in\T$).  The straight line going through $x_K$ and $x_L$ must be orthogonal to $\sigma=K|L$.
\end{itemize}

For all $K$ $\in \T$, we denote by $\NN (K)$ the set of control volumes which have a common  edge with $K$, and by m  the  Lebesgue measure in $\R^2$ or $\R$. 

 For all $\sigma \in \E_K$,   we define

$$
d_\sigma =\left\{\begin{array}{ll}
  \d(x_K,\sigma) &\quad\mbox{if }\sigma\subset   \partial \Omega, \\
  \d(x_K,x_L) &\quad\mbox{otherwise },\ \sigma=K|L,  
  \end{array}\right.
$$
where d is the Euclidean distance, and we denote by $\tau_\sigma$ the transmissibility coefficient given by:
$$
  \tau_\sigma = \frac{\m(\sigma)}{d_\sigma}, \quad \sigma\in\E.
$$

 The time discretization of $(0,T_f)$ is given by a uniform partition: $0=t_0<t_1<...<t_N=T_f$   with $N \in \N$ and   $t_n =n\Delta t$ for $n=0,...,N$.
 
 We denote by $h$ the maximal size (diameter) of the control volumes included in $\T$, and we define
$$
\delta=\text{max}(\Delta t, h).
$$

 In Section 4, the following time-step condition will be used: there exists $\alpha>0$ such that
 
 \begin{equation}
 1- 2 a\Delta t \geq \alpha.
  \label{(555)}
\end{equation}

 We will also need   this  additional constraint on the mesh: there exists $\xi>0$ such that
\begin{equation}
  \d(x_K,\sigma)\geq \xi \,\d(x_K,x_L),
  \label{(5)}
\end{equation}
which is specially needed to apply the discrete  Gagliardo-Nirenberg-Sobolev inequality (see Lemma 2.1). 

We define a weak solution of the system  \eqref{(2)} with boundary and initial conditions \eqref{(3)} as follows:
\begin{Def}
A weak solution of the  initial-boundary value problem  \eqref{(2)}--\eqref{(3)} is a pair of functions $(u,c)\in L^2(0,T;H^1(\Omega))^2 $ which verify the following identities for all test functions $\psi\in \D(\Omega\times[0,T_f))$ : 
\begin{gather}
  \int_0^T\int_\Omega\left(u\,\partial_t \psi - \mu \nabla u\cdot\nabla \psi + a\, u\nabla c\cdot\nabla \psi \right)\,dxdt
  + \int_\Omega u_0\,\psi(x,0)\,dx = 0,\label{(f1)}  \\
  \int_0^T\int_\Omega\nabla c\cdot\nabla\psi \ dxdt = \int_0^T\int_\Omega \left(\frac{u}{u+1}- c\right)\psi \,dxdt.\,\label{(f2)}
\end{gather}
\end{Def}

We denote by $X(\T)$  the set of functions from  $\Omega$ to $\R$ which are constant over each control volume of the mesh. Let  $1\leq p<\infty$, if $v \in X(\T)$, the corresponding discrete $L^p$ norm  reads
$$\|\,v\,\|_p = \left(\sum_{K\in\T}\m(K)\,|v_K|^p\right)^{1/p}, $$
where $v(x)=v_K$ for all $x \in K$ and for all $K \in \T$. 
 We define also the discrete $W^{1,p}$ seminorm and the discrete $W^{1,p}$ norm: 
 $$|\,v\,|_{1,p,\T} = \left(\sum_{\sigma\in\E}\frac{\m(\sigma)}{d_\sigma^{p-1}}\, 
  |D_\sigma v|^p\right)^{1/p}, 
$$

$$
\|\,v\,\|_{1,p,\T} = \|\,v\,\|_p + |\,v\,|_{1,p,\T} \, ,
$$ 
where for all $\sigma \in \E$, $D_\sigma v=0$ if $\sigma \subset \partial \Omega$ and $D_\sigma v=|v_K-v_L|$ otherwise, with $\sigma=K|L$.

We now recall the discrete Gagliardo-Nirenberg-Sobolev inequality (see \cite{chatard2}), which will be useful to establish a priori estimates in Section 4.

\begin{lemma} 
Let $\Omega$ be a an open bounded polyhedral domain of $\R^d$, $d\geq2$. Let $\T$ a mesh satisfying \eqref{(5)} and $v \in X(\T)$. 
\begin{itemize}
\item[$\bullet$] If $1\leq p < d$, let $1\leq s \leq r \leq p^*=pd/(d-p),$
\item[$\bullet$] If $p \geq d$, let $1\leq s \leq r <+\infty.$
\end{itemize}
Then there exists a constant $C>0$ only depending on $p,s,r,d$ and $\Omega$ such that 
$$\|\,v\,\|_r \, \leq \, \frac{C}{\xi^{(p-1)\theta /p}}\,\, \|\,v\,\|^{\theta}_{1,p,\T}\, \|\,v\,\|_s^{1-\theta},\ \forall v\in X(\T),$$
where $\theta$ is defined by 
$$
\theta=\dfrac{1/s-1/r}{1/s+1/d-1/p}.
$$
\end{lemma}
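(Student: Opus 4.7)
The plan is to follow the standard two-step strategy: first establish a discrete Sobolev embedding, then recover the Gagliardo--Nirenberg--Sobolev estimate by interpolation. Specifically, I would first prove that for $1\leq p<d$,
\[
\|v\|_{p^*}\;\leq\;\frac{C}{\xi^{(p-1)/p}}\,\|v\|_{1,p,\T},
\]
by adapting the classical Eymard--Gallouët--Herbin argument: slice $\Omega$ along each coordinate direction, bound one-dimensional cell-to-cell differences by a discrete $W^{1,1}$ seminorm along each slicing line, and combine the $d$ slicings via a Loomis--Whitney type identity to control $\|v\|_{d/(d-1)}$. The case of general $p<d$ is then obtained by applying the $p=1$ estimate to $|v|^{p^*(d-1)/d}$ together with Hölder's inequality on the mesh. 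The regularity hypothesis \eqref{(5)} enters precisely at this stage: since $d_\sigma$ is the distance between cell centers whereas the slicing argument naturally produces expressions involving $\m(\sigma)/d_\sigma$ and transverse distances, the passage between the two invokes \eqref{(5)} and yields the $\xi^{-(p-1)/p}$ prefactor.

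The second step is pure interpolation. Hölder's inequality applied to the decomposition $|v|^r = |v|^{\theta r}\cdot|v|^{(1-\theta)r}$ with conjugate exponents $p^*/(\theta r)$ and $s/((1-\theta)r)$ gives
\[
\|v\|_r\;\leq\;\|v\|_{p^*}^{\theta}\,\|v\|_s^{1-\theta}
\]
under the scaling constraint $1/r = \theta/p^* + (1-\theta)/s$. Substituting $1/p^* = 1/p - 1/d$ and solving for $\theta$ reproduces exactly the value announced in the lemma, so combining with the discrete Sobolev embedding completes the case $p<d$. For $p\geq d$ one proceeds slightly differently: since $p^*$ is no longer finite, I would derive a $W^{1,p}\hookrightarrow L^q$-type embedding for arbitrary finite $q$ by applying Hölder to the edge sums to pass from $p$ to some $p'<d$, then invoking the previous case; interpolation with $L^s$ then closes the argument.

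The main difficulty lies in Step~1: extracting the correct power of $\xi$ requires careful geometric bookkeeping of how edge-differences along slicing lines assemble into the discrete $W^{1,p}$ seminorm, with particular attention paid to the control volumes adjacent to $\partial\Omega$ and to edges whose normal is not aligned with the chosen coordinate direction. Once that embedding has been obtained with the right constant, the interpolation step is essentially one line of Hölder and the case $p\geq d$ follows by soft arguments on the bounded domain $\Omega$.
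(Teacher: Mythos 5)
First, note that the paper does not prove this lemma at all: it is quoted verbatim from the cited reference of Bessemoulin-Chatard, Chainais-Hillairet and Filbet, so the comparison below is against the argument in that reference rather than against anything in the present paper.

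Your two-step plan (discrete Sobolev embedding into $L^{p^*}$, then Lebesgue interpolation between $L^{p^*}$ and $L^s$) is sound and does reproduce the stated exponent $\theta$ --- but only in the subcritical range $1\leq p<d$, where the algebra $1/r=\theta/p^*+(1-\theta)/s$ with $1/p^*=1/p-1/d$ indeed solves to the announced $\theta$. The genuine gap is your treatment of $p\geq d$, and unfortunately that is exactly the case the paper needs (Lemma 4.1 uses $p=d=2$, $r=4$, $s=2$, i.e.\ the discrete Ladyzhenskaya inequality with $\theta=1/2$). If you lower $p$ to some $p'<d$ via H\"older on the edge sums and interpolate $L^r$ between $L^s$ and $L^q$ with $q=(p')^*<\infty$, the exponent you obtain is $\lambda=\frac{1/s-1/r}{1/s-1/q}$; matching the claimed $\theta=\frac{1/s-1/r}{1/s+1/d-1/p}$ would require $1/q=1/p-1/d\leq 0$, which no finite $q$ achieves. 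So your route only yields $\lambda>\theta$ strictly, a genuinely weaker inequality: in the application one squares $\|\sqrt{\tilde u}\|_4\leq C\xi^{-1/4}\|\sqrt{\tilde u}\|_{1,2,\T}^{1/2}\|\sqrt{\tilde u}\|_2^{1/2}$ and sums in time against the entropy bound on $\sum_n\Delta t\,|\sqrt{\tilde u^{n+1}}|_{1,2,\T}^2$, and any power larger than $1/2$ destroys that summability. The proof in the cited reference (mirroring Nirenberg's continuous argument) avoids this: it applies the $p=1$ embedding $\|w\|_{d/(d-1)}\leq C\,|w|_{1,1,\T}$ to $w=|v|^{\gamma}$, splits $\sum_\sigma \m(\sigma)\,|D_\sigma |v|^\gamma|$ by H\"older into $|v|_{1,p,\T}$ times a discrete $L^{p/(p-1)}$ norm of $|v|^{\gamma-1}$ (this is where \eqref{(5)} and the factor $\xi^{-(p-1)/p}$ enter, via $\sum_{\sigma\in\E_K}\m(\sigma)d_\sigma\leq \xi^{-1}d\,\m(K)$), controls that last factor by interpolating between $\|v\|_s$ and the target norm $\|v\|_r$ itself, and then absorbs the resulting power of $\|v\|_r$ into the left-hand side. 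That closure-by-absorption is the missing idea; it treats $p<d$ and $p\geq d$ uniformly and delivers the sharp $\theta$ that the convergence analysis actually relies on.
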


 \subsection{The corrected decoupled finite volume scheme }

  We begin this section by presenting a classical decoupled finite volume scheme   for the  problem      \eqref{(2)}--\eqref{(3)}:
\\
\\
for all $K\in \T$ and $n=0,...,N-1$

\begin{align}
& \m(K)\frac{u^{n+1}_K-u^n_K}{\Delta t}
  - \mu \sum_{\sigma\in\E_K}\tau_\sigma Du_{K,\sigma}^{n+1} \notag
  \\
  &+a \sum_{\substack{\sigma\in\E_{K}\\ \sigma=K|L}}\tau_\sigma\left(S\left( Dc_{K,\sigma}^{n+1}\right)u_{K}^{n+1}-S\left( -Dc_{K,\sigma}^{n+1}\right)u_{L}^{n+1}\right)=0, \label{(6)}
 \\
  & -\sum_{\sigma\in\E_K}\tau_\sigma\, Dc^{n+1}_{K,\sigma}
  +\m(K)\, c_K^{n+1}= \m(K)\, \frac{u_K^{n}}{u_K^{n}+1}, \label{(7)}
\end{align}
with the compatible initial condition
\begin{equation}
 u^0_K = \frac{1}{\m(K)}\int_K u_0(x)\,dx,\label{(8)}
 \end{equation}
 and where for all $\sigma \in \E_K$
$$ Dv_{K,\sigma}^n = \left\{\begin{array}{ll}
0 &\quad\mbox{for }\sigma\subset \partial \Omega,\\ 
  v_L^n-v_K^n &\quad\mbox{otherwise },\ \sigma=K|L.
  \end{array}\right. \\
  $$
  In the above scheme, the function $S$ is defined by
  \begin{equation}
 S(x) = \left\{\begin{array}{ll}
0, &\quad\mbox{if }x<2 \left(-\mu+\varepsilon \right)/a,\\ 
  \dfrac{x}{2}, &\quad\mbox{if }  |x| \leq 2 \left( \mu-\varepsilon \right)/a,\\
  x, &\quad\mbox{if },x>2 \left( \mu-\varepsilon \right)/a,
   \end{array}\right. \\ \label{(S)}
\end{equation}
where $\varepsilon$ is a small constant such that $\varepsilon\geq 0$ and $\varepsilon <<\mu$.

 The terms $u^n_K$ and $c^n_K$ denote respectively the approximations of the quantities  $\frac{1}{\m(K)}\int_K u(x,t^n)\,dx$ and $ \frac{1}{\m(K)}\int_K c(x,t^n)\,dx$.  As we can see, the proposed finite volume scheme is decoupled:  at each time-step, we begin by solving \eqref{(7)} to compute $c^{n+1}_K$ and then, we compute $u^{n+1}_K$ from \eqref{(6)}. The  discretization used for $\nabla \cdot ( u \nabla c) $ is equivalent to the second order central difference scheme when $ \left|Dc_{K,\sigma}^{n+1}\right| \leq  2 \left(\mu-\varepsilon\right)/a$  and to the first order upwind scheme when $Dc_{K,\sigma}^{n+1}< 2 \left(-\mu+\varepsilon\right)/a$ or $Dc_{K,\sigma}^{n+1}> 2 \left(\mu-\varepsilon\right)/a$.  When $\varepsilon=0$, the scheme is identical to that  of Spalding \cite{Spalding1} (see also \cite{Patankar1}).

 It is clear that we can obtain a best accuracy if we replace  \eqref{(7)} by the equation
 
 \begin{equation}
 -\sum_{\sigma\in\E_K}\tau_\sigma\, Dc^{n+1}_{K,\sigma}
  +\m(K)\, c_K^{n+1}= \m(K)\, \frac{u_K^{n+1}}{u_K^{n+1}+1},\label{(9)}
 \end{equation}
 however, it will be expensive in term of computational cost to find the solution of the scheme \eqref{(6)},\eqref{(9)} since we have to solve a large nonlinear system at each time step. 
 
Now, for all $K\in \T$ and $n=0,...,N-1$, we define  
 
\begin{equation}
T_K^{n+1}=\m(K)\left(\frac{u_K^{n+1}}{u_K^{n+1}+1}-\frac{u_K^{n}}{u_K^{n}+1}\right),\quad T_K^{0}=0 . \label{(11)}
 \end{equation} 
The equation \eqref{(9)} can  then be written as
 
\begin{equation}
 -\sum_{\sigma\in\E_K}\tau_\sigma\, Dc^{n+1}_{K,\sigma}
  +\m(K)\, c_K^{n+1}= \m(K)\, \frac{u_K^{n}}{u_K^{n}+1}+T_K^{n+1}.\label{(10)}
 \end{equation}

 As we can see, the only difference between   \eqref{(10)} and \eqref{(7)}  is the term $T_K^{n+1}$, so we conjecture that we can improve the accuracy of the decoupled scheme \eqref{(6)}--\eqref{(7)} if we add to the right hand side of \eqref{(7)} a correction term which approximates $T_K^{n+1}$. Hence, we propose to replace \eqref{(7)} with the following scheme:
 
\begin{equation}
 -\sum_{\sigma\in\E_K}\tau_\sigma\, Dc^{n+1}_{K,\sigma}
  +\m(K)\, c_K^{n+1}= \m(K)\, \frac{u_K^{n}}{u_K^{n}+1}+\beta_n T_K^{n},\label{(c)}
 \end{equation}  
where  $\beta_n>0$. The purpose of $\beta_n$ is to ensure the nonnegativity of the right-hand side of \eqref{(c)}, to do not affect the nonnegativity of $c^{n+1}_K$. Then, by supposing that $u_K^n\geq0$ (this will be proved in Section 3), we define $\beta_n$ for $n=0,...,N-1$  as follows:

\begin{equation}
\beta_n = \left\{\begin{array}{ll}
1, &\quad\mbox{if }\T^*_n = \emptyset,\\ 
  \underset{ {K \in \T^*_n}}{\text{min}} \cfrac{\cfrac{u_K^{n}}{u_K^{n}+1}}{\cfrac{u_K^{n-1}}{u_K^{n-1}+1}-\cfrac{u_K^{n}}{u_K^{n}+1}}, &\quad\mbox{otherwise },\\
   \end{array}\right. \\  \label{(beta)}
\end{equation}
 where $\T^*_n=\left\lbrace  K \in \T \, \middle| \, 2 \cfrac{u_K^{n}}{u_K^{n}+1}-\cfrac{u_K^{n-1}}{u_K^{n-1}+1}<0\right\rbrace  $ for $n=0,...,N-1$ and $\T^*_0 = \emptyset$. We can easily verify that $0<\beta_n\leq 1$. We mention however that, in practice, we will  take $\beta_n=1$, which seems the most natural choice. Indeed, several numerical tests are performed and, as expected, the  right-hand side  of \eqref{(c)} is always positive for this value  unless the time-step size is extremely large.  
 
  We define $u_{\delta},\: c_{\delta} $, the finite volume approximations of $u$ and $c$ by:
\begin{equation}
u_{\delta}(x,t)=u^{n+1}_K, \quad c_{\delta}(x,t)=c^{n+1}_K,  \quad x\in K,\,t\in [t^n,t^{n+1}).
\end{equation}

We define also approximations of the gradients of $u$ and $c$. To this end, we begin by defining $M_{K,\sigma}$, which is the cell formed from the vertices of $\sigma$, $x_K$ and $x_L$ if $\sigma=K|L \not \subset   \partial \Omega$, and from the vertices of $\sigma$ and $x_K$  if $\sigma \subset   \partial \Omega$.

Following \cite{Hillairet1}, we define the discrete gradient $dv_{\delta}$ which is the approximation of $\nabla v$  by
\begin{align*}
 &dv_{\delta}(x,t) = \frac{\m(\sigma)}{\m(M_{K,\sigma})}\,Dv_{K,\sigma}^{n+1}\,\,
  \nu_{K,\sigma}, \quad x\in M_{K,\sigma},\ t\in(t^n,t^{n+1}),\\
\end{align*}
for all $K\in \T$ and $n=0,...,N-1$, where $\nu_{K,\sigma}$ denotes the unit normal on $\sigma$ which is outward to $K$.

\section{Existence and uniqueness of a discrete solution}
\label{s3}
In this section, we prove existence and uniqueness of the  solution of the proposed scheme. We show also that the scheme is mass conserving and positivity preserving.

\begin{prop}
Assume  that $u_0 \geq 0$.
 Then there exists a unique solution  $\{(u_{K}^{n+1},c_{K}^{n+1}),\,K \in \T,\, n=0,...,N-1\} $ to the scheme \eqref{(6)},\eqref{(c)} which satisfies the following properties : 
 \begin{align}
 &u_{K}^{n+1}\geq 0 \quad \text{and} \quad  c_{K}^{n+1}\geq 0 \quad \text{for all}\ K \in \T,\,n=0,...,N-1 ,\label{(12)}
 \\
 &\sum_{K\in\T}\m(K)\,u_K^{n+1} = \sum_{K\in\T}\m(K)\,u_K^0 = \|u_0\|_{L^1(\Omega)}, \quad \text{for all } n=0,...,N-1.\label{(13)}
 \end{align}
\end{prop}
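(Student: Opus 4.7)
The plan is to work inductively on $n$. Fix $n$ and assume $u_K^n \ge 0$ for all $K$ (the base case $n=0$ follows from \eqref{(8)} because $u_0 \ge 0$). Because the scheme is decoupled, we handle \eqref{(c)} first to produce $c^{n+1}$, then plug it into \eqref{(6)} to produce $u^{n+1}$. Both equations are linear in the unknown at step $n+1$, so existence and uniqueness reduce to invertibility of two square matrices, and positivity reduces to showing that these matrices are $M$-matrices with nonnegative right-hand sides.

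For \eqref{(c)}, let $A$ be the matrix acting on $(c_K^{n+1})_{K\in\T}$. Its diagonal entry for $K$ is $\m(K)+\sum_{\sigma\in\E_K}\tau_\sigma>0$ and its off-diagonal entries are $-\tau_\sigma\le 0$, so $A$ is strictly diagonally dominant (by the $\m(K)$ mass term). Hence $A$ is invertible and $A^{-1}$ has nonnegative entries. The right-hand side of \eqref{(c)} is $\m(K)\frac{u_K^n}{u_K^n+1}+\beta_n T_K^n$; by the induction hypothesis the first term is nonnegative, and the explicit formula \eqref{(beta)} for $\beta_n$ is exactly the largest value ensuring that the combined right-hand side stays nonnegative on every control volume (one checks this by splitting on whether $K\in\T^*_n$ or not). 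Therefore $c_K^{n+1}\ge 0$.

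For \eqref{(6)}, rewrite the equation as $B\,u^{n+1}=\m(K)u_K^n/\Delta t$ where $B$ has diagonal
\[
B_{KK}=\frac{\m(K)}{\Delta t}+\sum_{\sigma\in\E_K}\tau_\sigma\bigl(\mu+a\,S(Dc_{K,\sigma}^{n+1})\bigr)
\]
and, for $\sigma=K|L$, off-diagonal entry $B_{KL}=-\tau_\sigma(\mu+a\,S(-Dc_{K,\sigma}^{n+1}))$. The key point — and the step I expect to be the main technical obstacle — is to show $\mu+a\,S(\pm Dc_{K,\sigma}^{n+1})\ge 0$ so that $B_{KL}\le 0$ and $B_{KK}>0$. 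This is where the Spalding-type cutoff \eqref{(S)} is designed to work: for $|x|\le 2(\mu-\varepsilon)/a$ we have $S(x)=x/2$ so $\mu+aS(\pm x)=\mu\pm ax/2\ge\varepsilon>0$; for $x>2(\mu-\varepsilon)/a$ we have $S(x)=x>0$ and $S(-x)=0$, so both $\mu+aS(x)$ and $\mu+aS(-x)$ are $\ge\mu$; the symmetric case $x<-2(\mu-\varepsilon)/a$ is analogous. Combined with the strict diagonal dominance coming from the $\m(K)/\Delta t$ term, $B$ is an invertible $M$-matrix, hence $B^{-1}\ge 0$; applied to the nonnegative right-hand side $\m(K)u_K^n/\Delta t$, this gives $u_K^{n+1}\ge 0$ and closes the induction.

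Finally, mass conservation \eqref{(13)} follows by summing \eqref{(6)} over $K\in\T$. The diffusion contribution $\sum_{K}\sum_{\sigma\in\E_K}\tau_\sigma Du_{K,\sigma}^{n+1}$ vanishes because each interior edge $\sigma=K|L$ appears twice with opposite signs (and boundary edges contribute $0$ by the definition of $Du_{K,\sigma}^{n+1}$). The convective sum vanishes by the same edge-pairing argument, using $Dc_{L,\sigma}^{n+1}=-Dc_{K,\sigma}^{n+1}$: the two contributions on $\sigma=K|L$ cancel exactly. Thus $\sum_K \m(K)u_K^{n+1}=\sum_K\m(K)u_K^n$, and iterating back to $n=0$ with \eqref{(8)} gives \eqref{(13)}.
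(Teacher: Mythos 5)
Your proposal follows the paper's proof almost step for step: induction on $n$, the elliptic step first via an M-matrix argument with the nonnegativity of the right-hand side guaranteed by the choice of $\beta_n$, then the cell-density step via a second M-matrix argument using $\mu+aS(x)\ge 0$, and finally mass conservation by edge pairing. All of those pieces are correct and are exactly what the paper does.

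The one step that is too quick --- and it is precisely the step where the paper has to be careful --- is the claim that the matrix of \eqref{(6)} is strictly diagonally dominant ``from the $\m(K)/\Delta t$ term.'' If you test dominance \emph{by rows}, the excess of the diagonal over the off-diagonal moduli in row $K$ is
\[
\frac{\m(K)}{\Delta t}+\sum_{\sigma=K|L}\tau_\sigma\Bigl(aS\bigl(Dc_{K,\sigma}^{n+1}\bigr)-aS\bigl(-Dc_{K,\sigma}^{n+1}\bigr)\Bigr)
=\frac{\m(K)}{\Delta t}+a\sum_{\sigma=K|L}\tau_\sigma\,Dc_{K,\sigma}^{n+1},
\]
using $S(x)-S(-x)=x$, and the last sum can be negative (it equals $\m(K)c_K^{n+1}$ minus the right-hand side of \eqref{(c)}), so row dominance only holds under a time-step restriction of the type \eqref{(555)} --- which Proposition 3.1 does not assume, and which is violated by, e.g., $\Delta t=5$, $a=2$ in Test 1. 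The correct statement is strict diagonal dominance \emph{by columns}: since $Dc_{L,\sigma}^{n+1}=-Dc_{K,\sigma}^{n+1}$, the entry in row $L$, column $K$ is $-\tau_\sigma\bigl(\mu+aS(Dc_{K,\sigma}^{n+1})\bigr)$, so the off-diagonal moduli in column $K$ sum to exactly $\sum_\sigma\tau_\sigma\bigl(\mu+aS(Dc_{K,\sigma}^{n+1})\bigr)$ and the column excess is exactly $\m(K)/\Delta t>0$ with no condition on $\Delta t$. Combined with the sign structure you already verified, this yields the nonsingular M-matrix property (a $Z$-matrix is an M-matrix iff its transpose is), and the rest of your argument goes through unchanged.
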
 

\begin{proof}
For all $n=0,...,N-1$, we define the   vectors $U^{n+1}=\left( U^{n+1}_K \right)_{K \in \T}$, $F^{n}=\left( F^n_K \right)_{K \in \T}$, $C^{n+1}=\left( C^{n+1}_K \right)_{K \in \T}$ and $G^{n}=\left( G^n_K \right)_{K \in \T}$, for which:

 \begin{align*}
&U_{K}^{n+1}=u_{K}^{n+1},\quad F_{K}^{n}=\m(K)u_{K}^{n}/\Delta t,
\\
&C_{K}^{n+1}=c_{K}^{n+1} ,\quad  G_{K}^{n}=\m(K) \frac{u_K^{n}}{u_K^{n}+1}+\beta_n T_K^n.
\end{align*}
We also define   the matrix $A^n= \left(A_{K,K}^n \right)_{K \in \T}$ and $B= \left(B_{K,K} \right)_{K \in \T}$ by

 \begin{gather*}
 \begin{align*}
 &A_{K,K}^n=\m(K)/\Delta t + \sum_{\substack{\sigma\in\E_{K} \\ \sigma \not\subset \partial \Omega}}\tau_\sigma\left(\mu+a \, S\left( Dc_{K,\sigma}^{n+1}\right)\right) ,
 \\ &A_{K,L}^n=-\tau_\sigma\left(\mu+a \, S\left( -Dc_{K,\sigma}^{n+1}\right)\right), \quad \text{if}\, L \in \NN(K) \text{ with } \sigma=K|L,
\\  &A_{K,L}^n=0, \quad \text{otherwise},  \\
\text{and} 
 \\  &B_{K,K}= \sum_{\substack{\sigma\in\E_{K} \\ \sigma \not\subset \partial \Omega}} \tau_\sigma +\m(K),
\\  &B_{K,L}=-\tau_\sigma, \quad \text{if}\, L \in \NN(K) \text{ with } \sigma=K|L,
\\   &B_{K,L}=0, \quad \text{otherwise}.
 \end{align*}
 \end{gather*}

 The decoupled scheme \eqref{(6)},\eqref{(c)} can be then written equivalently as

\begin{align}  
 &A^{n}\,U^{n+1}=F^n,\label{(14)} \\
 &BC^{n+1}=G^n.\label{(15)}
\end{align}  
To prove the desired results, we proceed by induction on $n$. The proof in the case of $n=0$ is similar to that of the inductive step. We argue now that the vectors $U^{n}$ and $C^{n}$ are defined and nonnegative. We have

$$
|B_{K,K}| - \sum_{\substack{ L \in \T \\ L\neq K }}|B_{K,L}| =\m(K) > 0,     
 $$   
  which means the matrix $B$ is strictly diagonally dominant by rows. Moreover,  since the diagonal elements of $B$ are positive and its  off-diagonal  entries are nonpositive, we can conclude that $B$ is a nonsingular M-matrix, which implies the unique solvability of \eqref{(15)} with $B^{-1} >0$ . Therefore, in view of the nonnegativity of $G^n$ (by the definition of $\beta_n$ \eqref{(beta)}), we deduce that $C^{n+1} \geq 0$.
Since for all $\sigma=K|L \in \E_K$, $Dc_{L,\sigma}^{n+1}=-Dc_{K,\sigma}^{n+1}$, we have for all $K\in \T$ 
  
   $$
|A_{K,K}^n| - \sum_{\substack{ L \in \T \\ L\neq K }}|A_{L,K}^n| =\m(K)/\Delta t>0.     
 $$ 
 Then $A^n$ is strictly diagonal dominant by columns. Now, since $\mu+a \, S\left(x\right)\geq0$ for all $x \in \R$ (by the definition \eqref{(S)}), the matrix $A^n$ has nonpositive off-diagonal and positive diagonal entries, which implies that $A^n$ is a nonsingular M-matrix. Consequently, the existence and uniqueness of $U^{n+1}$ is proved, and since $F^{n} \geq 0$, it is clear that $U^{n+1} \geq 0$.

Now, summing \eqref{(6)} over $K \in \T$, we have:
$$ \sum_{K\in\T} \m(K) \left(u_{K}^{n+1}-u_K^n\right)=0.$$ 
Then, summing over $n=0,1,...,p$, with $p\leq N-1$ we get \eqref{(13)}:
$$\sum_{K\in\T}\m(K)\,u_K^{p+1} = \sum_{K\in\T}\m(K)\,u_K^0 = \|u_0\|_{L^1(\Omega)}, \quad \text{for all } p=0,...,N-1\, ,$$
which ends the proof.

  \end{proof}

\section{A priori estimates }
\label{s4}
 In all this section, we assume that $u_0 \in L^2(\Omega)$ and  $u_0 \geq 0$. We assume also that  $\{\left(u_{K}^{n+1},c_{K}^{n+1}\right),\,K \in \T,\, n=0,...,N-1 \} $  is the solution  of the scheme \eqref{(6)},\eqref{(c)}.
 \begin{prop}
 For all  $n=0,...,N-1$ and for all $K \in \T$, we have
\begin{align}
 & c_K^{n+1}  \leq 2,\label{(20)} \\
& \sum_{K\in\T}\sum_{\sigma\in\E_K}   \tau_\sigma  \left|Dc_{K,\sigma}^{n+1}\right|^2 \leq C.\label{(21)}
 \end{align}
where $C$ is a positive constant which only depends on $\Omega$. 
  \end{prop}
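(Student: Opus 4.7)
The plan is to establish \eqref{(20)} by a discrete maximum principle and then feed that $L^\infty$ bound into an $L^2$ energy estimate on \eqref{(c)} to obtain \eqref{(21)}. Before starting I observe that, for every $K \in \T$ and $n = 0, \dots, N-1$, the right-hand side
$$R_K^{n+1} := \m(K)\,\frac{u_K^{n}}{u_K^{n}+1} + \beta_n\, T_K^n$$
of \eqref{(c)} satisfies $0 \le R_K^{n+1} \le 2\,\m(K)$. Nonnegativity is exactly what the construction \eqref{(beta)} of $\beta_n$ was designed to ensure, and was already exploited in Proposition~3.1. The upper bound follows because $\frac{u_K^n}{u_K^n+1} < 1$, both fractions appearing in \eqref{(11)} lie in $[0,1)$ so that $|T_K^n| \le \m(K)$, and $\beta_n \le 1$.

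To prove \eqref{(20)}, I pick $K_0 \in \T$ realizing $\max_{K \in \T} c_K^{n+1}$. For every neighbor $L$ of $K_0$, the jump $Dc_{K_0,\sigma}^{n+1} = c_L^{n+1} - c_{K_0}^{n+1}$ is nonpositive (boundary edges contributing zero), so $-\sum_{\sigma \in \E_{K_0}} \tau_\sigma Dc_{K_0,\sigma}^{n+1} \ge 0$. Consequently \eqref{(c)} evaluated at $K_0$ collapses to $\m(K_0)\, c_{K_0}^{n+1} \le R_{K_0}^{n+1} \le 2\,\m(K_0)$, which yields $c_K^{n+1} \le c_{K_0}^{n+1} \le 2$ for every $K$.

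To prove \eqref{(21)} I multiply \eqref{(c)} by $c_K^{n+1}$ and sum over $K \in \T$. The standard discrete integration by parts applies: grouping the two contributions of any interior edge $\sigma = K|L$ produces $\tau_\sigma\,(c_K^{n+1} - c_L^{n+1})^2$, while boundary edges carry zero jump. This yields
$$\sum_{\sigma \in \E} \tau_\sigma \,|D_\sigma c^{n+1}|^2 + \|c^{n+1}\|_2^2 \;=\; \sum_{K \in \T} c_K^{n+1}\, R_K^{n+1}.$$
Using the already-established bounds $c_K^{n+1} \le 2$ and $R_K^{n+1} \le 2\,\m(K)$ together with nonnegativity of both factors, the right-hand side is at most $4|\Omega|$. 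Since each interior edge belongs to $\E_K$ for exactly two cells $K$ and boundary edges contribute zero, the sum appearing in \eqref{(21)} equals $2\sum_{\sigma \in \E}\tau_\sigma\,|D_\sigma c^{n+1}|^2$, so the claim follows with $C = 8|\Omega|$.

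The only mildly delicate ingredient is the uniform upper bound $R_K^{n+1} \le 2\,\m(K)$: the asymmetric constraint \eqref{(beta)} used to enforce nonnegativity does not by itself yield this, and one must instead invoke the explicit form \eqref{(11)} of $T_K^n$ together with $\beta_n \le 1$. Once that bound is in hand, the remainder reduces to a textbook discrete maximum principle followed by an elementary $c_K^{n+1}$-multiplier energy identity, and no further obstruction is anticipated.
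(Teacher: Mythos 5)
Your proof is correct and follows essentially the same route as the paper: a discrete maximum principle at the cell maximizing $c^{n+1}$, resting on the bound $R_K^{n+1}\le 2\,\m(K)$ (which the paper obtains from $0<\beta_n\le 1$ and the form of $T_K^n$, exactly as you do), followed by multiplying \eqref{(c)} by $c_K^{n+1}$ and summing by parts. The only cosmetic difference is that the paper tests the maximizing cell's equation against $\left(c_K^{n+1}-2\right)^+$ rather than reading off $\m(K_0)\,c_{K_0}^{n+1}\le R_{K_0}^{n+1}$ directly; the substance is identical.
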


\begin{proof}

 Let $K$ be the control volume which verifies $c_K^{n+1}=\mbox{max} \lbrace c_L^{n+1} \rbrace_{L\in \T}$. Multiplying the equation \eqref{(c)} by  $\left(c_{K}^{n+1}-2 \right)^+$, and using the fact that $0<\beta_n\leq 1$, we get

\begin{align*}
-\sum_{\sigma\in\E_K} \tau_\sigma  Dc_{K,\sigma}^{n+1}
\left(c_{K}^{n+1}-2 \right)^+ &\leq \m(K)  \left(2-c_{K}^{n+1} \right) \left(c_{K}^{n+1}-2 \right)^+,
\end{align*}
it follows that
\begin{align*}
 -\sum_{\sigma\in\E_K} \tau_\sigma  Dc_{K,\sigma}^{n+1}
\left(c_{K}^{n+1}-2 \right)^+\leq0.
\end{align*}
 In view of the choice of $K$,  $Dc_{K,\sigma}^{n+1} \leq 0$ for all $ \sigma \in \E_K$, which leads to 
$$
-\sum_{\sigma\in\E_K} \tau_\sigma  Dc_{K,\sigma}^{n+1}
\left(c_{K}^{n+1}-2 \right)^+ \geq 0.
$$
 Hence, we deduce that $\left(c_{K}^{n+1}-2 \right) \leq 0$. This establish   \eqref{(20)}.

  Now, multiplying the equation \eqref{(c)} by $c_{K}^{n+1}$, summing over $K\in \T$,  using a summation by parts and \eqref{(20)}, we get
   
    $$
\dfrac{1}{2}\sum_{K\in\T}\sum_{\sigma\in\E_K} \tau_\sigma  \left|Dc_{K,\sigma}^{n+1}\right|^2 + \sum_{K\in\T}  \m(K) \left|c_K^{n+1}\right|^2 \leq 4 \m(\Omega),
   $$ 
which gives  \eqref{(21)}.
  
  \end{proof}

\begin{lemma}
 Assume that \eqref{(5)} is fulfilled and that $\varepsilon>0$ (see \eqref{(S)}). Then, there exists a constant $C>0$ only depending on $\Omega$, $T_f$,$u_0$, $a$, $\varepsilon$ and $\xi$  such that    
\begin{equation}
\sum_{n=0}^N\sum_{K\in\T}\Delta t \ \m(K) \left|u_K^{n}\right|^2 \leq C .\label{(17)}
\end{equation}
\end{lemma}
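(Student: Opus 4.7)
The plan is to perform a discrete energy estimate on \eqref{(6)}, which is the discrete analogue of the PDE identity $\tfrac{1}{2}\tfrac{d}{dt}\|u\|_{L^2}^2 + \mu\|\nabla u\|_{L^2}^2 = -\tfrac{a}{2}\int u^2\Delta c$ closed via the elliptic equation for $c$. First I would multiply \eqref{(6)} by $u_K^{n+1}$ and sum over $K\in\T$. Using $(a-b)a\geq\tfrac{1}{2}(a^2-b^2)$ on the time-derivative term and performing the standard summation by parts on the diffusion term, I collect a telescoping $L^2$ term plus a coercive contribution $\mu|u^{n+1}|_{1,2,\T}^2$. All the real work is in the hybrid convective flux.

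For each interior edge $\sigma=K|L$, setting $s=Dc_{K,\sigma}^{n+1}$ and collecting contributions from both adjacent control volumes, the convective contribution can be rewritten as
\[
\tau_\sigma\frac{S(s)+S(-s)}{2}\bigl(u_K^{n+1}-u_L^{n+1}\bigr)^{2} \;+\; \tau_\sigma\frac{s}{2}\bigl((u_K^{n+1})^{2}-(u_L^{n+1})^{2}\bigr).
\]
A case analysis on the three regimes of \eqref{(S)} (centered, left-upwind, right-upwind) shows that $S(s)+S(-s)\geq 0$, so the symmetric first piece is nonnegative and may be discarded. The antisymmetric second piece telescopes to $\tfrac{a}{2}\sum_{K}(u_K^{n+1})^{2}\sum_{\sigma\in\E_K}\tau_\sigma Dc_{K,\sigma}^{n+1}$, and here the discrete elliptic equation \eqref{(c)} lets me replace $\sum_{\sigma\in\E_K}\tau_\sigma Dc_{K,\sigma}^{n+1}$ by $\m(K)c_K^{n+1}-\m(K)\tfrac{u_K^{n}}{u_K^{n}+1}-\beta_n T_K^{n}$. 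Applying the three elementary bounds $c_K^{n+1}\geq 0$, $\tfrac{u_K^{n}}{u_K^{n}+1}\leq 1$, and $|\beta_n T_K^{n}|\leq \m(K)$ (the last because $\beta_n\in(0,1]$ and the two values of $x/(x+1)$ defining $T_K^{n}$ each lie in $[0,1]$), I obtain
\[
\frac{1}{2\Delta t}\bigl(\|u^{n+1}\|_2^{2}-\|u^{n}\|_2^{2}\bigr) + \mu|u^{n+1}|_{1,2,\T}^{2} \;\leq\; a\,\|u^{n+1}\|_2^{2}.
\]

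Rearranging gives $(1-2a\Delta t)\|u^{n+1}\|_2^{2}\leq\|u^{n}\|_2^{2}$, and the time-step assumption \eqref{(555)} upgrades this to $\|u^{n+1}\|_2^{2}\leq(1+2a\Delta t/\alpha)\|u^{n}\|_2^{2}$ via the elementary inequality $(1-x)^{-1}\leq 1+x/\alpha$ when $1-x\geq\alpha$. A discrete Gronwall iteration, combined with $\|u^{0}\|_2\leq\|u_0\|_{L^2(\Omega)}$ from Jensen, yields a uniform bound $\|u^{n}\|_2^{2}\leq e^{2aT_f/\alpha}\|u_0\|_{L^2(\Omega)}^{2}$ for all $n\in\{0,\dots,N\}$; multiplying by $\Delta t$ and summing over $n$ delivers \eqref{(17)}. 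The main obstacle is the rearrangement of the hybrid convective flux in the middle step: without the structural identity $S(s)+S(-s)\geq 0$ and the subsequent reduction of the antisymmetric piece to a discrete Laplacian of $c$, the chemotactic flux tested against $u$ would contribute a sign-indefinite term of order $|Dc|\cdot u^{2}$ that could not be absorbed either by the diffusion term $\mu|u^{n+1}|_{1,2,\T}^{2}$ or by the Gronwall argument.
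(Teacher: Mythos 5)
Your discrete energy computation is sound as far as it goes: the splitting of the hybrid flux on each edge into a symmetric part weighted by $S(s)+S(-s)\geq 0$ and an antisymmetric part weighted by $\tfrac{s}{2}$ via the identity $S(s)-S(-s)=s$, followed by the substitution of $\sum_{\sigma\in\E_K}\tau_\sigma Dc_{K,\sigma}^{n+1}$ from \eqref{(c)}, is exactly how the paper treats the convective term $I_3$ in Proposition 4.2, and it does yield $\|u^{n+1}\|_2^2-\|u^n\|_2^2\leq 2a\Delta t\,\|u^{n+1}\|_2^2$. The problem is the closing step. You invoke the time-step condition \eqref{(555)} to absorb the right-hand side and run Gronwall, but \eqref{(555)} is \emph{not} a hypothesis of this lemma: the lemma assumes only the mesh condition \eqref{(5)} and $\varepsilon>0$, and asserts a constant depending on $\varepsilon$ and $\xi$ (not on $\alpha$). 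For $\Delta t\geq 1/(2a)$ your recursion gives nothing. A telling symptom is that your argument never uses either of the two stated hypotheses. What you have actually proved is (essentially) the content of Proposition 4.3, which is the alternative regime; Lemma 4.1 exists precisely to supply the $L^2(\Omega_T)$ bound \emph{without} any CFL-type restriction, so that Proposition 4.2 can then absorb the term $a\sum_n\Delta t\|u^{n}\|_2^2$ unconditionally. Using \eqref{(555)} inside the proof of the lemma would make the lemma redundant and forfeit the paper's claim that the scheme converges without a time-step constraint.

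The paper's route is genuinely different: an entropy (rather than $L^2$) estimate. One sets $\tilde u_K^n=u_K^n+1$, rewrites \eqref{(6)} so that a residual diffusion with coefficient $\varepsilon$ appears explicitly (this is where $\varepsilon>0$ is indispensable), and tests with $\Delta t\,\log(\tilde u_K^{n+1})$. The convective term is controlled by the same structural facts you identified ($\tilde S\geq0$ and $S(x)-S(-x)=x$) together with \eqref{(c)} and the bound $c_K^{n+1}\leq 2$, and the outcome is a uniform bound on $\sum_{n}\sum_{K}\sum_{\sigma}\Delta t\,\tau_\sigma\,|D(\sqrt{\tilde u^{n+1}})_{K,\sigma}|^2$ with a factor $\varepsilon/2$ in front. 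Combining this discrete $L^2_tH^1_x$ control of $\sqrt{\tilde u}$ with the mass conservation \eqref{(13)} through the discrete Gagliardo--Nirenberg--Sobolev inequality of Lemma 2.1 (which is where the mesh condition \eqref{(5)} and the constant $\xi$ enter) gives the $L^2(\Omega_T)$ bound \eqref{(17)} for every $\Delta t$. If you want to salvage your write-up, it can be kept as a proof of Proposition 4.3, but the lemma itself requires the entropy argument.
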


\begin{proof}
Performing the following changes of variable:
\begin{equation}
\tilde u_{K}^{n}=u_{K}^{n}+1\quad \text{for all}\ K \in \T,\,n=0,...,N ,\label{(22)}
\end{equation}
and
\begin{equation}
\tilde S(x)=S(x)+\left(\mu-\varepsilon \right)/a ,\label{(22b)}
\end{equation}
 and using the identity 
\begin{equation}
 S(x)-S(-x)=x,\label{(id)}
\end{equation}
the scheme \eqref{(6)} becomes
\begin{align}
& \m(K)\frac{\tilde u^{n+1}_K-\tilde u^n_K}{\Delta t}
  - \varepsilon \sum_{\sigma\in\E_K}\tau_\sigma D \tilde u_{K,\sigma}^{n+1} \notag
  \\
  &+a \sum_{\substack{\sigma\in\E_{K}\\ \sigma=K|L}}\tau_\sigma\left(\tilde S\left( Dc_{K,\sigma}^{n+1}\right) \tilde u_{K}^{n+1}-\tilde S\left( -Dc_{K,\sigma}^{n+1}\right)\tilde u_{L}^{n+1}\right)- a \sum_{\sigma\in\E_K}\tau_\sigma D c_{K,\sigma}^{n} = 0. \label{(23)}
\end{align}
From \eqref{(c)},  using \eqref{(20)} and since $0<\beta_n\leq 1$,  we can easily see that
\begin{equation*}
\sum_{\sigma\in\E_K}\tau_\sigma\, Dc^{n}_{K,\sigma}
  \leq 3\m(K).
\end{equation*}
Using the above inequality in \eqref{(23)}, we obtain
\begin{align*}
 &\m(K)\frac{\tilde u^{n+1}_K-\tilde u^n_K}{\Delta t}
  - \varepsilon \sum_{\sigma\in\E_K}\tau_\sigma D \tilde u_{K,\sigma}^{n+1} 
  \\
  &\leq -a \sum_{\substack{\sigma\in\E_{K}\\ \sigma=K|L}}\tau_\sigma\left(\tilde S\left( Dc_{K,\sigma}^{n+1}\right) \tilde u_{K}^{n+1}-\tilde S\left( -Dc_{K,\sigma}^{n+1}\right)\tilde u_{L}^{n+1}\right)+3 a\, \m(K)\, .
\end{align*}
 Let us now multiply the above inequality  by $\Delta t \, \log \left(\tilde u_K^{n+1} \right)$, and summing over $K\in \T$, we find that
 \begin{equation}
 E_1+E_2\leq E_3+E_4,
 \end{equation}
 where
\begin{align*}
&E_1=\sum_{K\in\T} \m(K)\left(\tilde u^{n+1}_K-\tilde u^n_K \right)\log \left(\tilde u_K^{n+1} \right),
\\
&E_2=-\varepsilon \sum_{K\in\T} \sum_{\sigma\in\E_K} \Delta t \, \tau_\sigma D \tilde u_{K,\sigma}^{n+1} \log \left(\tilde u_K^{n+1}\right),
\\
&E_3=-a \sum_{\substack{\sigma\in\E_{K}\\ \sigma=K|L}} \Delta t \, \tau_\sigma\left(\tilde S\left( Dc_{K,\sigma}^{n+1}\right) \tilde u_{K}^{n+1}-\tilde S\left( -Dc_{K,\sigma}^{n+1}\right)\tilde u_{L}^{n+1}\right) \log \left(\tilde u_K^{n+1}\right),
\\
&E_4=3a \sum_{K\in\T} \Delta t \, \m(K)  \log \left(\tilde u_K^{n+1}\right).
\end{align*}
From the expression of $E_1$, we can see that
\begin{align*}
E_1=&\sum_{K\in\T} \m(K)\left(\tilde u^{n+1}_K \log \left(\tilde u_K^{n+1}\right)-\tilde u^n_K \log \left(\tilde u_K^{n} \right) \right)
\\
&-\sum_{K\in\T} \m(K)\tilde u_K^{n} \left( \log \left(\tilde u_K^{n+1} \right) -
  \log \left(\tilde u_K^{n} \right)\right),
\end{align*}
and since we have from a Taylor expansion of $\log$ 
 $$\tilde u_K^{n} \left( \log \left(\tilde u_K^{n+1} \right) -
  \log \left(\tilde u_K^{n} \right)\right) \leq \tilde u_K^{n+1}-\tilde u_K^{n},$$
  we deduce using the mass conservation property \eqref{(13)} that  
  $$E_1 \geq \sum_{K\in\T} \m(K)\left(\tilde u^{n+1}_K \log \left(\tilde u_K^{n+1}\right)-\tilde u^n_K \log \left(\tilde u_K^{n} \right) \right).$$ 
  By an integration by parts on $E_2$, we get
  $$E_2=\dfrac{\varepsilon}{2} \sum_{K\in\T} \sum_{\sigma\in\E_K} \Delta t \, \tau_\sigma D \tilde u_{K,\sigma}^{n+1}  \left( \log \left(\tilde u_L^{n+1} \right) -
  \log \left(\tilde u_K^{n+1} \right)\right). $$
  Then, by a Taylor expansion of $\log$ we get 
  $$E_2=\dfrac{\varepsilon}{2} \sum_{K\in\T} \sum_{\sigma\in\E_K} \Delta t \, \tau_\sigma  \left| \dfrac{D \tilde u_{K,\sigma}^{n+1}}{\sqrt{\theta_\sigma^{n+1} }}\right|^2, $$
where $\theta_\sigma^{n+1}=t_\sigma \tilde u_{K}^{n+1} + \left(1-t_\sigma \right) \tilde u_{L}^{n+1} $ with $t_\sigma \in (0,1)$. Hence, using the fact that
$$\dfrac{\left |D \tilde u_{K,\sigma}^{n+1}\right|}{\sqrt{\theta_\sigma^{n+1} }}=\dfrac{\sqrt{\tilde u_{K}^{n+1}}+\sqrt{\tilde u_{L}^{n+1}}}{\sqrt{\theta_\sigma^{n+1} }}\left |D \left(\sqrt{\tilde u^{n+1}}\right)_{K,\sigma}\right| \geq \left |D \left(\sqrt{\tilde u^{n+1}}\right)_{K,\sigma}\right|,$$
we infer that
$$
E_2 \geq \dfrac{\varepsilon}{2} \sum_{K\in\T} \sum_{\sigma\in\E_K} \Delta t \, \tau_\sigma  \left |D \left(\sqrt{\tilde u^{n+1}}\right)_{K,\sigma}\right|^2.
$$
Using a summation by parts, we obtain
\begin{align*}
E_3^{n+1}=\dfrac{a}{2}\sum_{K\in\T} \sum_{\substack{\sigma\in\E_K \\ \sigma=K|L}} \Delta t\, &\tau_\sigma \left(\tilde S\left( Dc_{K,\sigma}^{n+1}\right)\tilde u_{K}^{n+1}-\tilde S\left( -Dc_{K,\sigma}^{n+1}\right)\tilde u_{L}^{n+1}\right)
\\
&\times \left(\log\left(\tilde u_{L}^{n+1}\right)-\log\left(\tilde u_{K}^{n+1}\right)\right).
\end{align*}

Now, we define $\tilde E_3^{n+1}$
\begin{align*}
\tilde E_3^{n+1}=\dfrac{a}{2}\sum_{K\in\T} \sum_{\substack{\sigma\in\E_K \\ \sigma=K|L}} \Delta t &\,\tau_\sigma  \theta_\sigma^{n+1} Dc_{K,\sigma}^{n+1}
\left(\log\left(\tilde u_{L}^{n+1}\right)-\log\left(\tilde u_{K}^{n+1}\right)\right).
\end{align*}
Next, using the identity \eqref{(id)} and the last expression of $E_3^{n+1}$, we can write
\begin{align*}
E_3^{n+1}-\tilde E_3^{n+1}=&\dfrac{a}{2}\sum_{K\in\T} \sum_{\substack{\sigma\in\E_K \\ \sigma=K|L}} \Delta t \, \tau_\sigma \tilde S\left( Dc_{K,\sigma}^{n+1} \right) \left(\tilde u_{K}^{n+1}-\theta_\sigma^{n+1}\right)
\\
&\times\left(\log\left(\tilde u_{L}^{n+1}\right)-\log\left(\tilde u_{K}^{n+1}\right)\right)
+\dfrac{a}{2}\sum_{K\in\T} \sum_{\substack{\sigma\in\E_K \\ \sigma=K|L}} \Delta t \, \tau_\sigma \tilde S\left(- Dc_{K,\sigma}^{n+1} \right)
\\
&\times \left(\theta_\sigma^{n+1}-\tilde u_{L}^{n+1}\right)
\left(\log\left(\tilde u_{L}^{n+1}\right)-\log\left(\tilde u_{K}^{n+1}\right)\right)
\\
=&\dfrac{a\left(1-t_\sigma \right)}{2} \sum_{K\in\T} \sum_{\substack{\sigma\in\E_K \\ \sigma=K|L}} \Delta t \, \tau_\sigma \tilde S\left( Dc_{K,\sigma}^{n+1} \right) \left(\tilde u_{K}^{n+1}-\tilde u_{L}^{n+1}\right)
\\
&\times\left(\log\left(\tilde u_{L}^{n+1}\right)-\log\left(\tilde u_{K}^{n+1}\right)\right)
+\dfrac{a\,t_\sigma}{2}\sum_{K\in\T} \sum_{\substack{\sigma\in\E_K \\ \sigma=K|L}} \Delta t \, \tau_\sigma \tilde S\left(- Dc_{K,\sigma}^{n+1} \right)
\\
&\times \left(\tilde u_{K}^{n+1}-\tilde u_{L}^{n+1}\right)
\left(\log\left(\tilde u_{L}^{n+1}\right)-\log\left(\tilde u_{K}^{n+1}\right)\right).
\end{align*}
Since we have from \eqref{(S)} that $S(x)\geq \left(-\mu+\varepsilon \right)/a$ $\forall x \in \R$, it follows that $\tilde S(x)\geq 0$. Consequently,  the above equality implies that $E_3^{n+1} \leq \tilde E_3^{n+1}$, which yields by a Taylor expansion of log to
$$
 E_3^{n+1} \leq \dfrac{a}{2}\sum_{K\in\T} \sum_{\sigma\in\E_K} \Delta t \,\tau_\sigma   Dc_{K,\sigma}^{n+1}
D\tilde u_{K,\sigma}^{n+1}.
$$
Multiplying \eqref{(c)} by $\tilde u_K^{n+1}$ and summing by parts we can easily see that
$$ 
E_3^{n+1} \leq 2 a \Delta t \,  \|u_0+1\|_{L^1(\Omega)}. 
$$
Finally for $E_4$, we have
$$
E_4\leq 3 a \, \Delta t \|u_0+1\|_{L^1(\Omega)}.
$$
Collecting the estimates obtained for $E_1$, $E_2$, $E_3$ and $E_4$, and summing over $n=0,...,N-1$, we obtain
\begin{align*}
\sum_{K\in\T}& \m(K)\tilde u^{N}_K \log \left(\tilde u_K^{N}\right)+ \dfrac{\varepsilon}{2} \sum_{n=0}^{N-1}\sum_{K\in\T} \sum_{\sigma\in\E_K} \Delta t \, \tau_\sigma  \left |D \left(\sqrt{\tilde u^{n+1}}\right)_{K,\sigma}\right|^2 
\\ &\leq
5a\,T \|u_0+1\|_{L^1(\Omega)}+\sum_{K\in\T} \m(K)\tilde u^{0}_K \log \left(\tilde u_K^{0}\right).
\end{align*}

Therefore, we have
 \begin{equation}
  \sum_{n=0}^{N-1}\sum_{K\in\T} \sum_{\sigma\in\E_K} \Delta t \, \tau_\sigma  \left |D \left(\sqrt{\tilde u^{n+1}}\right)_{K,\sigma}\right|^2 \leq C,\label{(25)}
   \end{equation}

where $C>0$ is a constant depending on $\Omega$, $T_f$,$u_0$, $a$ and $\varepsilon$ .

 Now, we denote  by $\tilde u_{\delta}^{n+1} \in X(\T)$ the function defined by: $
\tilde u_{\delta}^{n+1}(x)=\tilde u^{n+1}_K$ for all $ x\in K
$ and $n=0,...,N-1$,

From Lemma 2.1, we have
$$\left\|\,\sqrt{\tilde u_{\delta}^{n+1}}\,\right\|_4 \, \leq \, \frac{C}{\xi^{1/4}}\,\, \left\|\,\sqrt{\tilde u_{\delta}^{n+1}}\,\right\|^{1/2}_{1,2,\T}\, \left\|\,\sqrt{\tilde u_{\delta}^{n+1}}\,\right\|_2^{1/2},$$
which implies that
$$
\left\|\,\tilde u_{\delta}^{n+1}\,\right\|_2^2 \, \leq \, \frac{C}{\xi}\,\, \left\|\,\sqrt{\tilde u_{\delta}^{n+1}}\,\right\|^{2}_{1,2,\T}\, \left\|\,\tilde u_{\delta}^{n+1}\,\right\|_1,
$$
where $C>0$ depends on $\Omega$.

Finally, gathering the above inequality with \eqref{(25)} and \eqref{(13)}, we deduce that $u_{\delta}$ is bounded in $L^2\left(\Omega_T \right)$. This completes the proof of the lemma.
\end{proof}

 \begin{prop}
 Assume that \eqref{(5)} is fulfilled and that $\varepsilon>0$. Then, there exists  a constant $C>0$ depending on  $\Omega$, $T_f$,$u_0$, $a$, $\mu$ and $\xi$  such that, for all $n=0,1,...,p$, with $p\leq N-1$ : 
  \begin{align}
   &\sum_{K\in\T} \m(K) \left|u_K^{p+1}\right|^{2} + \sum_{n=0}^p\sum_{K\in\T}\sum_{\sigma\in\E_K}  \Delta t \ \tau_\sigma  \left|Du_{K,\sigma}^{n+1}\right|^2 \leq C,\label{(24)}
   \\
  & \sum_{n=0}^{p}\sum_{K\in\T} \m(K) \left(u_K^{n+1}-u_K^n \right)^2\leq C.\label{(cv)}
  \end{align}
  \end{prop}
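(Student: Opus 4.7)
The plan is to test \eqref{(6)} with $\Delta t\,u_K^{n+1}$ and sum over $K\in\T$. The time term is handled by the identity $(a-b)a=\tfrac{1}{2}(a^2-b^2)+\tfrac{1}{2}(a-b)^2$, which after telescoping over $n=0,\dots,p$ yields precisely the two quantities $\|u^{p+1}\|_2^2$ and $\sum_n\|u^{n+1}-u^n\|_2^2$ appearing in \eqref{(24)} and \eqref{(cv)}. The diffusive contribution, after a discrete summation by parts, produces $\mu\Delta t\sum_{\sigma}\tau_\sigma|Du^{n+1}_{K,\sigma}|^2$, i.e.\ the other good term in \eqref{(24)}.

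The convective term is the delicate one. Rewriting the edge sum, it becomes $-a\Delta t\sum_\sigma\tau_\sigma Du^{n+1}_{K,\sigma}\bigl[S(Dc^{n+1}_{K,\sigma})u_K^{n+1}-S(-Dc^{n+1}_{K,\sigma})u_L^{n+1}\bigr]$. Using the identity \eqref{(id)} I would split
\begin{equation*}
S(Dc^{n+1}_{K,\sigma})u_K^{n+1}-S(-Dc^{n+1}_{K,\sigma})u_L^{n+1}=-\frac{S(Dc^{n+1}_{K,\sigma})+S(-Dc^{n+1}_{K,\sigma})}{2}Du_{K,\sigma}^{n+1}+\frac{Dc^{n+1}_{K,\sigma}}{2}(u_K^{n+1}+u_L^{n+1}).
\end{equation*}
A short case analysis on \eqref{(S)} shows $S(x)+S(-x)\ge 0$, so the first piece contributes a nonnegative dissipation to the left-hand side and may be discarded. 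Only the cross term $-\tfrac{a\Delta t}{2}\sum_{\sigma}\tau_\sigma Dc^{n+1}_{K,\sigma}Du^{n+1}_{K,\sigma}(u_K^{n+1}+u_L^{n+1})$ remains to be controlled.

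The main obstacle, and the place where the corrected decoupled structure of the scheme is crucial, is to bound this cross term without any $L^\infty$ control on $Dc$. The idea is to eliminate $Dc$ by testing \eqref{(c)} with $(u_K^{n+1})^2$; a second discrete integration by parts, using the telescoping $(u_L^{n+1})^2-(u_K^{n+1})^2=Du^{n+1}_{K,\sigma}(u_K^{n+1}+u_L^{n+1})$, yields the identity
\begin{equation*}
\sum_{\sigma\in\E}\tau_\sigma Dc^{n+1}_{K,\sigma}\,Du^{n+1}_{K,\sigma}\,(u_K^{n+1}+u_L^{n+1})=\sum_{K\in\T}\m(K)\bigl(R_K^n-c_K^{n+1}\bigr)(u_K^{n+1})^2,
\end{equation*}
where $R_K^n:=\tfrac{u_K^n}{u_K^n+1}+\beta_n\bigl(\tfrac{u_K^n}{u_K^n+1}-\tfrac{u_K^{n-1}}{u_K^{n-1}+1}\bigr)$. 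By the very construction of $\beta_n$ in \eqref{(beta)} the quantity $R_K^n$ lies in $[0,2]$, and by \eqref{(20)} we have $c_K^{n+1}\ge 0$, so the right-hand side is bounded above by $2\|u^{n+1}\|_2^2$.

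Collecting everything yields the per-step inequality
\begin{equation*}
\tfrac{1}{2}\|u^{n+1}\|_2^2+\tfrac{1}{2}\|u^{n+1}-u^n\|_2^2+\mu\Delta t\sum_{\sigma\in\E}\tau_\sigma|Du^{n+1}_{K,\sigma}|^2\le\tfrac{1}{2}\|u^n\|_2^2+a\Delta t\,\|u^{n+1}\|_2^2.
\end{equation*}
I would conclude by invoking the time-step condition \eqref{(555)}: moving the last term to the left rewrites the coefficient of $\|u^{n+1}\|_2^2$ as $\tfrac{1}{2}(1-2a\Delta t)\ge \tfrac{\alpha}{2}$, and a discrete Gronwall argument together with $p\Delta t\le T_f$ delivers a uniform bound on $\|u^{p+1}\|_2^2$. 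Summation of the per-step inequality from $n=0$ to $p$ then gives the remaining dissipation estimate in \eqref{(24)} and the time-difference estimate \eqref{(cv)}.
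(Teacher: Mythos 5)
Your energy estimate itself is exactly the paper's argument: testing \eqref{(6)} with $\Delta t\,u_K^{n+1}$, the split of the flux via \eqref{(id)} into a dissipative part (sign of $S(x)+S(-x)\ge 0$) plus the cross term $\tfrac{a\Delta t}{2}\sum_\sigma\tau_\sigma Dc^{n+1}_{K,\sigma}Du^{n+1}_{K,\sigma}(u_K^{n+1}+u_L^{n+1})$, and the elimination of $Dc$ by testing \eqref{(c)} with $(u_K^{n+1})^2$, using $0\le R_K^n\le 2$ and $c_K^{n+1}\ge 0$, are all correct and lead to the same per-step inequality as the paper's \eqref{(4.6)}.

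The gap is in your final paragraph: you close the argument by invoking the time-step condition \eqref{(555)} and a discrete Gronwall inequality, but \eqref{(555)} is \emph{not} a hypothesis of this proposition. The stated assumptions are the mesh-regularity condition \eqref{(5)} and $\varepsilon>0$, with a constant depending on $\xi$ and no restriction on $\Delta t$; if $\Delta t$ is large, $1-2a\Delta t$ may be negative and your absorption of $a\Delta t\|u^{n+1}\|_2^2$ into the left-hand side fails. What you have actually proved is the paper's \emph{next} proposition (Proposition 4.3), which assumes \eqref{(555)} and gets a constant depending on $\alpha$. The intended conclusion here is different: under \eqref{(5)} and $\varepsilon>0$, Lemma 4.1 (the entropy estimate combined with the discrete Gagliardo--Nirenberg--Sobolev inequality, which is where $\xi$ enters) already gives $\sum_{n}\Delta t\,\|u^{n}\|_2^2\le C$, so after summing the per-step inequality over $n=0,\dots,p$ the offending term $a\sum_{n}\Delta t\,\|u^{n+1}\|_2^2$ on the right-hand side is bounded directly, with no Gronwall step and no constraint on $\Delta t$. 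Replacing your last paragraph by this appeal to Lemma 4.1 makes the proof match the statement.
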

  
  \begin{proof}

 Multiplying the equation \eqref{(6)} by $\Delta t \, u_K^{n+1}$, and summing over $K\in \T$, we have
 \begin{equation}
 I_1+I_2=I_3,\label{(4.5)}
 \end{equation}
 where
  \begin{gather*}
  \begin{align*}
 &I_1=\sum_{K\in\T} \m(K)u_K^{n+1} \left(u_{K}^{n+1}-u_K^n\right), 
 \\
 &I_2=-\mu\sum_{K\in\T}\sum_{\sigma\in\E_K}  \Delta t \ \tau_\sigma \ u_K^{n+1}Du_{K,\sigma}^{n+1},  
 \\
 &I_3=-a \sum_{K\in\T} \sum_{\substack{\sigma\in\E_{K}\\ \sigma=K|L}}\Delta t \ \tau_\sigma u_K^{n+1} \left(S\left( Dc_{K,\sigma}^{n+1}\right)u_{K}^{n+1}-S\left( -Dc_{K,\sigma}^{n+1}\right)u_{L}^{n+1}\right), 
  \end{align*}
  \end{gather*}
Performing a summation by parts on $I_2$. This yields,
$$I_2=\dfrac{\mu}{2}\sum_{K\in\T}\sum_{\sigma\in\E_K}  \Delta t \ \tau_\sigma \ \left|Du_{K,\sigma}^{n+1}\right|^2.$$ 
  A summation by parts on $I_3$ gives
  $$I_3=\dfrac{a}{2} \sum_{K\in\T} \sum_{\substack{\sigma\in\E_{K}\\ \sigma=K|L}}\Delta t \ \tau_\sigma  \left(S\left( Dc_{K,\sigma}^{n+1}\right)u_{K}^{n+1}-S\left( -Dc_{K,\sigma}^{n+1}\right)u_{L}^{n+1}\right)Du_{K,\sigma}^{n+1}.$$
Now, using   \eqref{(id)}, we can easily verify that
\begin{align*}
&2I_3-\dfrac{a}{2}\sum_{K\in\T} \sum_{\substack{\sigma\in\E_{K}\\ \sigma=K|L}} \Delta t \,\tau_\sigma 
  Dc^{n}_{K,\sigma} \left(u_L^{n+1}+u_K^{n+1}\right) Du_{K,\sigma}^{n+1}\\
  &\leq \dfrac{a}{2}\sum_{K\in\T} \sum_{\substack{\sigma\in\E_{K}\\ \sigma=K|L}}\tau_\sigma \Delta t \left(S\left( Dc_{K,\sigma}^{n+1}\right) +S\left( -Dc_{K,\sigma}^{n+1}\right) \right)\left(u_K^{n+1}-u_L^{n+1}\right)Du_{K,\sigma}^{n+1}\\
  &\leq 0,
\end{align*}
and since by multiplying \eqref{(c)} by $\left|u_{K}^{n+1}\right|^2$, and sum by parts, we have
 $$\dfrac{1}{2}\sum_{K\in\T}\sum_{\substack{\sigma\in\E_K \\ \sigma=K|L}} \ \tau_\sigma  Dc_{K,\sigma}^{n+1}\left(\left|u_L^{n+1}\right|^2-\left|u_K^{n+1}\right|^2\right)\leq 2 \sum_{K\in\T} \m(K) \left|u_K^{n+1}\right|^2,$$
we deduce that
  
  \begin{equation}
  I_3 \leq  a  \Delta t \sum_{K\in\T} \m(K)  \left|u_K^{n+1}\right|^2. \notag
  \end{equation}

  Employing the previous estimates for  $I_2$ and $I_3$ in \eqref{(4.5)}, and summing over $n=0,1,...,p$, with $p\leq N-1$ , we obtain

  \begin{align}
   \dfrac{1}{2}&\sum_{K\in\T} \m(K) \left|u_K^{p+1}\right|^{2} + \dfrac{1}{2} \sum_{n=0}^{p}\sum_{K\in\T} \m(K) \left(u_K^{n+1}-u_K^n \right)^2 \notag
   \\
   &+\dfrac{\mu}{2}\sum_{n=0}^p\sum_{K\in\T}\sum_{\sigma\in\E_K}  \Delta t \ \tau_\sigma  \left|Du_{K,\sigma}^{n+1}\right|^2 \notag
   \\
   &\leq \dfrac{1}{2}\sum_{K\in\T} \m(K) \left|u_K^0\right|^{2} +a \sum_{n=1}^{p+1}\sum_{K\in\T} \Delta t \, \m(K)\left|u_K^{n}\right|^2. \label{(4.6)} 
  \end{align}
  It follows from Lemma 4.1 that the right hand side of the above inequality is bounded, which gives the desired results. 
  \end{proof}

\begin{prop}
 Assume that the time step condition \eqref{(555)} is fulfilled. Then, the estimates \eqref{(24)} and \eqref{(cv)} hold for a constant $C>0$  depending on  $\Omega$, $T_f$, $u_0$, $a$ , $\mu$ and $\alpha$.
  \end{prop}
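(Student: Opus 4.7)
The plan is to reuse, essentially verbatim, the proof of Proposition 4.2 up to and including its key energy inequality \eqref{(4.6)}. A careful reading of that derivation shows that the restriction $\varepsilon>0$ enters only at the very last step, when Lemma 4.1 is invoked to bound the right-hand side $a\sum_{n=1}^{p+1}\Delta t\sum_{K}\m(K)|u_{K}^{n}|^{2}$. All prior manipulations, namely the summation-by-parts treatments of $I_{2}$ and $I_{3}$, the use of the identity \eqref{(id)}, and the test of \eqref{(c)} against $|u_{K}^{n+1}|^{2}$, rely only on the $L^{\infty}$ bound \eqref{(20)} and on the structural properties of $S$, and therefore remain valid here. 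Consequently we inherit \eqref{(4.6)} as a starting point and only need to argue differently on its right-hand side.

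The idea is to absorb the last time slice of the right-hand sum into the left. Splitting
\[
a\sum_{n=1}^{p+1}\Delta t\sum_{K\in\T}\m(K)|u_{K}^{n}|^{2}
 = a\,\Delta t\sum_{K\in\T}\m(K)|u_{K}^{p+1}|^{2}
 + a\sum_{n=1}^{p}\Delta t\sum_{K\in\T}\m(K)|u_{K}^{n}|^{2},
\]
and moving the first piece to the left-hand side of \eqref{(4.6)}, the coefficient in front of $\sum_{K}\m(K)|u_{K}^{p+1}|^{2}$ becomes $\tfrac{1}{2}-a\Delta t$, which by the time-step condition \eqref{(555)} is bounded below by $\alpha/2>0$. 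Dropping the two nonnegative gradient and increment sums on the left we obtain, for every $p\leq N-1$,
\[
\frac{\alpha}{2}\sum_{K\in\T}\m(K)|u_{K}^{p+1}|^{2} \leq
 \tfrac{1}{2}\sum_{K\in\T}\m(K)|u_{K}^{0}|^{2} + a\sum_{n=1}^{p}\Delta t\sum_{K\in\T}\m(K)|u_{K}^{n}|^{2}.
\]
A discrete Gronwall lemma applied to $y_{p}=\sum_{K}\m(K)|u_{K}^{p}|^{2}$ then yields a uniform bound $\max_{0\leq p\leq N}y_{p}\leq C_{1}$, with $C_{1}$ depending only on $\Omega$, $T_{f}$, $u_{0}$, $a$ and $\alpha$, and in particular $\sum_{n=1}^{N}\Delta t\,y_{n}\leq T_{f}C_{1}$.

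To finish, I would plug these $L^{\infty}(0,T_{f};L^{2})$ and $L^{2}(0,T_{f};L^{2})$-type bounds back into \eqref{(4.6)}, now keeping the gradient and increment terms on the left. The $\mu/2$ prefactor of the gradient sum and the $1/2$ prefactor of the increment sum then yield \eqref{(24)} and \eqref{(cv)} respectively, with a constant depending on $\Omega$, $T_{f}$, $u_{0}$, $a$, $\mu$ and $\alpha$, as claimed. The main obstacle, though purely algebraic, is the bookkeeping of the absorption step: one must verify that the precise form of \eqref{(555)}, namely $1-2a\Delta t\geq\alpha$, is exactly what is needed to produce the positive coefficient $\tfrac{1}{2}-a\Delta t\geq\alpha/2$ that allows the Gronwall iteration to close. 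Once that is checked, the rest is entirely standard.
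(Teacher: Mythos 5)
Your proposal is correct and follows essentially the same route as the paper: the authors likewise start from \eqref{(4.6)}, absorb the $n=p+1$ slice of the right-hand sum using $1-2a\Delta t\geq\alpha$ to get a coefficient of at least $\alpha/2$ on $\sum_{K}\m(K)|u_K^{p+1}|^2$, apply the discrete Gronwall inequality to obtain a uniform $L^2$ bound, and then feed that bound back into \eqref{(4.6)} to conclude \eqref{(24)} and \eqref{(cv)}. Your observation that the derivation of \eqref{(4.6)} never uses $\varepsilon>0$ or \eqref{(5)} is exactly the point that makes this alternative hypothesis work.
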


\begin{proof}
From \eqref{(4.6)}, and since  \eqref{(555)} is verified, we have
$$
\sum_{K\in\T} \m(K) \left|u_K^{p+1}\right|^{2} \leq
  \sum_{n=1}^{p}  \dfrac{2a \Delta t}{\alpha} \sum_{K\in\T}  \, \m(K)\left|u_K^{n}\right|^2+ \dfrac{\|u_0\|_2}{\alpha}  .
$$
By the discrete Gronwall inequality, we get

$$
\sum_{K\in\T} \m(K) \left|u_K^{p+1}\right|^{2} \leq
 \dfrac{\|u_0\|_2}{\alpha} \exp \left( \dfrac{2a T_f}{\alpha}    \right),
$$
which with \eqref{(4.6)} completes the proof.
\end{proof}

\section{Convergence of the finite volume scheme} 
\label{s5}

  \begin{prop}
   Assume that $u_0 \in L^2(\Omega)$ and  $u_0 \geq 0$. Assume that the time step condition \eqref{(555)} is fulfilled or that  \eqref{(5)} is fulfilled and  $\varepsilon>0$ (see \eqref{(S)}). Then, there exists a subsequence of $(u_{\delta},c_{\delta})_{\delta}$, not relabeled, and a function $(u,c)\in L^2(0,T;H^1(\Omega))^2$ such that as $\delta$ goes to zero we have
  \begin{gather*}
  u_{\delta} \to u \quad\text{strongly in }L^2(\Omega_T), \\
  du_{\delta} \rightharpoonup \nabla u \quad\text{weakly in }L^2(\Omega_T), \\
  c_{\delta}\rightharpoonup c, \quad  dc_{\delta}\rightharpoonup \nabla c
  \quad\text{weakly in }L^2(\Omega_T).
\end{gather*}  
   \end{prop}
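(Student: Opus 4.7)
The plan is to combine the a priori bounds from Section~4 with the classical finite-volume compactness machinery, in the spirit of \cite{Filbet1}. First I would use Proposition~4.1 together with the reconstruction of $dc_{\delta}$ on the diamond cells $M_{K,\sigma}$ to conclude that $c_\delta$ is uniformly bounded in $L^\infty(\Omega_T)$ and $dc_\delta$ in $L^2(\Omega_T)^2$. Similarly, either Proposition~4.2 (when $\varepsilon>0$ and \eqref{(5)} holds) or Proposition~4.3 (when the time-step condition \eqref{(555)} holds) yields uniform bounds on $u_\delta$ in $L^2(\Omega_T)$ and on $du_\delta$ in $L^2(\Omega_T)^2$. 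Extracting subsequences then provides weakly convergent subsequences $u_\delta \rightharpoonup u$, $du_\delta \rightharpoonup \eta$, $c_\delta \rightharpoonup c$ and $dc_\delta \rightharpoonup \chi$ in the respective $L^2$ spaces.

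The crucial step is upgrading the weak convergence of $u_\delta$ to strong convergence in $L^2(\Omega_T)$, for which I would invoke the Kolmogorov--Fr\'echet--Riesz compactness criterion. Space translates are controlled by the uniform discrete $H^1$ bound \eqref{(24)} via the classical estimate for piecewise-constant finite-volume functions (cf.\ Lemma~3.3 of \cite{Eymard1}), which gives $\|u_\delta(\cdot+\xi,\cdot)-u_\delta\|_{L^2(\Omega_T)}^2 \leq C\,|\xi|(|\xi|+h)$. For time translates, I would sum the scheme \eqref{(6)} over the time indices lying in $(t,t+\tau)$, test against $u_\delta(\cdot,t+\tau)-u_\delta(\cdot,t)$, integrate in $t$, and bound the diffusion and chemotaxis contributions by Cauchy--Schwarz using \eqref{(21)}, \eqref{(24)} together with the $L^\infty$ bound \eqref{(20)} and the boundedness of $S$; this yields a time-translate estimate of the form $C(\tau+\Delta t)$ which, combined with the space-translate bound, produces the required relative compactness in $L^2(\Omega_T)$.

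It then remains to identify $\eta = \nabla u$ and $\chi = \nabla c$. For any vector-valued test function $\varphi \in C_c^\infty(\Omega_T;\R^2)$, I would rewrite $\int_{\Omega_T} du_\delta\cdot\varphi\,dx\,dt$ using the definition of $du_\delta$ on the diamonds $M_{K,\sigma}$, perform a discrete integration by parts to transfer the differences onto $\varphi$, and pass to the limit using the strong $L^2$ convergence of $u_\delta$ and the smoothness of $\varphi$; the limit is $-\int_{\Omega_T} u\,\mathrm{div}\,\varphi\,dx\,dt$, so $\eta$ is the distributional gradient of $u$, and the analogous argument gives $\chi = \nabla c$. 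In particular $u,c \in L^2(0,T_f;H^1(\Omega))$. The main obstacle is the time-translate estimate: while \eqref{(cv)} controls one-step jumps of $u_\delta$, promoting this to a continuous $L^2(\Omega_T)$ translate estimate requires a careful duality argument on the full discrete equation in which the convection--chemotaxis term involving $S(Dc_{K,\sigma}^{n+1})$ must be absorbed without spoiling the discrete gradient bounds already established.
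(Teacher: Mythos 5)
Your proposal follows essentially the same route as the paper, which simply defers to the proof of Proposition 4.1 in \cite{Filbet1}: a priori bounds, Kolmogorov space/time-translate compactness for $u_\delta$, and identification of the discrete gradients in the limit. One caveat: $S$ is not a bounded function, and the property actually needed (the only point the paper adds to Filbet's argument) is $S(x)\le|x|$ for all $x\in\R$, so the chemotaxis flux is controlled through the discrete gradient estimate \eqref{(21)} rather than through any boundedness of $S$.
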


\begin{proof}
The proof will be omitted since it is exactly similar to that of Proposition 4.1 in \cite{Filbet1}. We need only to note that $S(x)\leq |x|$ for all $x\in \R$.
\end{proof}

\begin{theo}
 Assume that $u_0 \in L^2(\Omega)$ and  $u_0 \geq 0$. Assume that the time step condition \eqref{(555)} is fulfilled or that  \eqref{(5)} is fulfilled and  $\varepsilon>0$ (see \eqref{(S)}). The function $(u,c)$ constructed in Proposition 5.1 is a weak  solution of \eqref{(2)}--\eqref{(3)} in the sense of Definition 2.1 .
\end{theo}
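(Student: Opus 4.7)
The plan is to derive the weak identities \eqref{(f1)} and \eqref{(f2)} as limits of discrete test-function formulations, essentially along the lines of Proposition 4.1 in \cite{Filbet1}. I fix $\psi\in\mathcal{D}(\Omega\times[0,T_f))$, set $\psi_K^n:=\psi(x_K,t^n)$, and choose $N$ large enough that $\psi_K^n\equiv 0$ for $n\ge N-1$. Multiplying \eqref{(6)} by $\Delta t\,\psi_K^n$ and \eqref{(c)} by $\psi_K^{n+1}$, summing over $K\in\T$ and $n=0,\dots,N-1$, and then performing Abel summation in time (which brings in the initial trace $\sum_K\m(K)u_K^0\,\psi_K^0$ via \eqref{(8)}) and summation by parts in space on each diffusive flux, I obtain two discrete identities that structurally mirror \eqref{(f1)} and \eqref{(f2)}, the only non-standard pieces being the Spalding switch $S$ in the convective term and the extra correction $\beta_n T_K^n$ in the elliptic equation.

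Next I pass to the limit in each linear piece. The accumulation term converges to $\int_0^{T_f}\!\int_\Omega u\,\partial_t\psi\,dxdt+\int_\Omega u_0\,\psi(\cdot,0)\,dx$ by strong $L^2$-convergence $u_\delta\to u$ (Proposition 5.1) together with the consistency $(\psi_K^{n+1}-\psi_K^n)/\Delta t\to\partial_t\psi$. The diffusive parts converge to $-\mu\int\nabla u\cdot\nabla\psi$ and $\int\nabla c\cdot\nabla\psi$ by the weak convergences $du_\delta\rightharpoonup\nabla u$ and $dc_\delta\rightharpoonup\nabla c$, after reconstructing $\tau_\sigma\,D\psi_{K,\sigma}^n\,\nu_{K,\sigma}\approx\nabla\psi$ on the diamond cells $M_{K,\sigma}$. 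The reaction $u_K^n/(u_K^n+1)$ converges strongly in $L^2$ since $r\mapsto r/(r+1)$ is bounded and Lipschitz and $u_\delta\to u$ strongly.

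The delicate contributions are the convective flux and the correction. For the convection I write $S(x)=x/2+R(x)$, with $R$ supported on $\{|x|>2(\mu-\varepsilon)/a\}$ and satisfying $|R(x)|\le|x|/2$. Using the identity \eqref{(id)}, the $x/2$ part recombines after summation by parts into the symmetric form
\[
\frac{a}{2}\sum_{n,K}\Delta t\sum_{\substack{\sigma\in\E_K\\ \sigma=K|L}}\tau_\sigma\,Dc_{K,\sigma}^{n+1}\,(u_K^{n+1}+u_L^{n+1})\,D\psi_{K,\sigma}^n,
\]
which converges to $a\int u\,\nabla c\cdot\nabla\psi$ by the strong--weak product of $u_\delta$ (strong in $L^2$) against $dc_\delta$ (weak in $L^2$), tested against the smooth $\nabla\psi$. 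The remainder generated by $R$ is supported on edges where $|Dc_{K,\sigma}^{n+1}|>2(\mu-\varepsilon)/a$; by the estimate \eqref{(21)} and a Chebyshev/diamond-cell argument the space--time measure of that region tends to zero, so combined with the $L^2$-bounds on $u_\delta$ and $dc_\delta$ this remainder vanishes. For the correction, the elementary inequality $|r/(r+1)-s/(s+1)|\le|r-s|$ gives $|T_K^n|\le\m(K)|u_K^n-u_K^{n-1}|$, and with $0<\beta_n\le 1$ and \eqref{(cv)} the Cauchy--Schwarz inequality yields
\[
\Bigl|\sum_{n=0}^{N-1}\sum_{K\in\T}\Delta t\,\beta_n T_K^n\,\psi_K^{n+1}\Bigr|\le\|\psi\|_\infty\sqrt{T_f\,\m(\Omega)\,C}\,\sqrt{\Delta t},
\]
which vanishes as $\delta\to 0$. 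Collecting all these limits produces \eqref{(f1)} and \eqref{(f2)} for $(u,c)$, establishing the theorem. The principal obstacle is the convective remainder: one must exploit that the set of edges on which the upwind branch of $S$ is active is, thanks to \eqref{(21)}, asymptotically negligible so that the scheme behaves in the limit like a centered discretisation of $\nabla\cdot(u\nabla c)$.
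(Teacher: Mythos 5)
Your overall architecture coincides with the paper's: multiply \eqref{(6)} by $\Delta t\,\psi_K^n$, sum over $K$ and $n$, pass to the limit using the convergences of Proposition 5.1, treat the limit in \eqref{(c)} as in Filbet, and handle the correction term by Cauchy--Schwarz together with the Lipschitz bound $\left|r/(r+1)-s/(s+1)\right|\le C\left|r-s\right|$ and the estimate \eqref{(cv)}; that last part, including the extra factor $\sqrt{\Delta t}$, is exactly the paper's mechanism. Your splitting $S(x)=x/2+R(x)$ is algebraically equivalent to the identity the paper uses, namely $S\left(Dc_{K,\sigma}^{n+1}\right)u_K^{n+1}-S\left(-Dc_{K,\sigma}^{n+1}\right)u_L^{n+1}=-S\left(-Dc_{K,\sigma}^{n+1}\right)Du_{K,\sigma}^{n+1}+u_K^{n+1}Dc_{K,\sigma}^{n+1}$: since $S(x)-S(-x)=x$ forces $R$ to be even, your remainder reduces to $-R\left(Dc_{K,\sigma}^{n+1}\right)Du_{K,\sigma}^{n+1}$ and $R(x)-x/2=S(-x)$, so your ``centered plus remainder'' decomposition and the paper's ``convection plus numerical diffusion'' decomposition ($J_{32}+J_{31}$) differ only in how the $\tfrac12\,Dc\,Du$ piece is allocated. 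Both organisations are legitimate.

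The one step that does not hold as written is your justification that the remainder vanishes. You argue that the set of edges on which $\left|Dc^{n+1}_{K,\sigma}\right|>2(\mu-\varepsilon)/a$ has vanishing space--time measure (true, by \eqref{(21)} and Chebyshev) and that this, combined with the $L^2$-bounds on $u_\delta$ and $dc_\delta$, forces the term to zero. It does not: the product of two sequences that are merely bounded in $L^2$ is only bounded in $L^1$, and an $L^1$-bounded sequence integrated over sets of vanishing measure need not tend to zero, since no equi-integrability is available. You do not need the support property at all: because $|R(x)|\le|x|/2$, the remainder has the form $\sum_{n,\sigma}\Delta t\,\tau_\sigma\,R\left(Dc^{n+1}_{K,\sigma}\right)Du^{n+1}_{K,\sigma}D\psi^{n}_{K,\sigma}$ with $\left|D\psi^{n}_{K,\sigma}\right|\le\|\nabla\psi\|_\infty\,d_\sigma\le Ch$, and Cauchy--Schwarz together with \eqref{(21)} and \eqref{(24)} gives an $O(h)$ bound. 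This is precisely how the analogous term $J_{31}$ is controlled in the paper (via the remark $S(x)\le|x|$, following Proposition 4.3 of Filbet); replace your Chebyshev step by that estimate and the proof closes.
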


\begin{proof}
Let $\psi \in \D(\Omega\times[0,T_f))$, and let

\begin{align*}
&J_{10}(\delta)=-\int_0^T\int_\Omega u_\delta \,\frac{\partial \psi}{\partial t} dx \, dt-
 \int_\Omega u_\delta (x,0)\,\psi(x,0)\,dx, \\
&J_{20}(\delta)=\mu \int_0^T\int_\Omega d u_\delta \cdot\nabla \psi \ dx \, dt, \\
&J_{30}(\delta)=-a \ \int_0^T\int_\Omega u_\delta \ d c_\delta \cdot\nabla \psi \ dx \, dt, \\
\end{align*}

with $\lambda(\delta)=-\left(J_{10}(\delta)+J_{20}(\delta)+J_{30}(\delta)\right).$

Let $\psi_K^n=\psi\left(x_K,t^n\right)$ for all $K \in \T$ and $n=0,1,...,N$. Multiplying the scheme \eqref{(6)} by $\Delta t \psi_K^n$ and summing for $K$ and $n$, we have
$$J_{1}(\delta)+J_{2}(\delta)+J_{3}(\delta)=0$$
where
\begin{align*}
 &J_1(\delta)= \sum_{n=0}^{N-1}\sum_{K\in\T} \m(K) \left(u_{K}^{n+1}-u_K^n\right)\psi_K^n, 
 \\
 &J_2(\delta)=-\mu \sum_{n=0}^{N-1}\sum_{K\in\T}\sum_{\sigma\in\E_K}  \Delta t \ \tau_\sigma \ Du_{K,\sigma}^{n+1} \psi_K^n,   
 \\
 &J_3(\delta)= a \sum_{n=0}^{N-1} \sum_{K\in\T} \sum_{\substack{\sigma\in\E_{K}\\ \sigma=K|L}}\Delta t \ \tau_\sigma  \left(S\left( Dc_{K,\sigma}^{n+1}\right)u_{K}^{n+1}-S\left( -Dc_{K,\sigma}^{n+1}\right)u_{L}^{n+1}\right)\psi_K^n.
  \end{align*}
   From the weak convergence of $\left(dc_\delta \right)_\delta$ to $\nabla c$, $\left(du_\delta \right)_\delta$ to $\nabla u$, and  the strong convergence of $\left(u_\delta \right)_\delta$ to $u$ in $L^2(\Omega_T)$,  we have  when $\delta  \to 0$
  $$\lambda(\delta) \to \int_0^T\int_\Omega\left(u\,\frac{\partial \psi}{\partial t} - \mu \nabla u\cdot\nabla \psi + a\, u\nabla c\cdot\nabla \psi \right)\,dxdt
  + \int_\Omega u_0\,\psi(x,0)\,dx.$$
  To prove that $(u,c)$ defined in Proposition 5.1 verifies \eqref{(f1)}, we need to prove that $J_i(\delta)-J_{i0}(\delta) \to 0$ as $\delta \to 0$ for $i=1,2,3$. The proof is exactly similar to that of Proposition 4.3 in  \cite{Filbet1}, we simply have to note that, using the identity (for $\sigma=K|L$):
\begin{align*}
 S\left( Dc_{K,\sigma}^{n+1}\right)u_{K}^{n+1}-S\left( -Dc_{K,\sigma}^{n+1}\right)u_{L}^{n+1}=-S\left( -Dc_{K,\sigma}^{n+1}\right)Du_{K,\sigma}^{n+1}+u_K^{n+1}Dc_{K,\sigma}^{n+1}.
\end{align*}
we can write $J_3(\delta)=J_{31}(\delta)+J_{32}(\delta)$, with 
 
\begin{align*} 
 &J_{31}(\delta)=\dfrac{a}{2}\sum_{n=0}^{N-1}\sum_{K\in\T}\sum_{\sigma\in\E_K}  \Delta t \ \tau_\sigma S\left( -Dc_{K,\sigma}^{n+1}\right)Du_{K,\sigma}^{n+1} D \psi_{K,\sigma}^{n},\\
 & J_{32}(\delta)=-\dfrac{a}{2}\sum_{n=0}^{N-1}\sum_{K\in\T}\sum_{\sigma\in\E_K}  \Delta t \ \tau_\sigma u_K^{n+1}Dc_{K,\sigma}^{n+1} D \psi_{K,\sigma}^{n}.
\end{align*} 
The limit $\delta \to 0$ in the equation \eqref{(c)} is performed as in  the proof of \cite[Proposition 4.2]{Filbet1}. The treatment of the term $\dfrac{u_K^n}{u_K^n+1}$ is obvious, since it is bounded for all $K\in \T$ and $n=0,...,N-1$. For the correction term, using the Cauchy-Schwarz inequality and the fact that $0<\beta_n\leq 1$, we obtain 

\begin{align*}
\sum_{n=0}^{N-1}\sum_{K\in\T} \Delta t \beta_n T_K^n \psi_{K}^{n}\leq &\sqrt{\Delta t} \left( \sum_{n=0}^{N-1}\sum_{K\in\T} \Delta t \,  \m(K) \left|\psi_{K}^{n} \right|^2 \right)^{1/2}\\
& \times\left( \sum_{n=0}^{N-1}\sum_{K\in\T}   \m(K)\left| \frac{u_K^{n}}{u_K^{n}+1}-\frac{u_K^{n-1}}{u_K^{n-1}+1}\right|^2 \right)^{1/2}.
\end{align*}
Now,  since the function $x\longrightarrow \dfrac{x}{x+1}$ is smooth for all $x\geq0$, there exists a constant $C$ such that
\begin{equation*}
\left| \frac{u_K^{n}}{u_K^{n}+1}-\frac{u_K^{n-1}}{u_K^{n-1}+1}\right| \leq C \left| u_K^{n}-u_K^{n-1}\right|.
\end{equation*} 
From the two last inequalities, and from the regularity of $\psi$ and the estimate \eqref{(cv)}, it is easy to see that
\begin{equation*}
\sum_{n=0}^{N-1}\sum_{K\in\T} \Delta t \beta_n T_K^n \psi_{K}^{n}\  \to 0 \mbox{\quad as } \delta  \to 0.
\end{equation*}
This completes the proof of Theorem 5.1. 
\end{proof}

 \section{Numerical experiments}
In this section, we use the proposed corrected decoupled scheme to solve a number of two-dimensional chemotaxis systems. The obtained numerical results are compared with those of more usual decoupled methods. We mention that the computational time of all  compared schemes is almost the same, so the comparison is only performed  with respect to accuracy. For the value of $\varepsilon$,  reference solutions are computed with $\varepsilon=0$, and we take $\varepsilon=10^{-6}$ for the other computed solutions. Finally, as mentioned in Section 2.2, $\beta_n=1$ in all numerical tests.

\textit{Test 1.} This test deals with the comparison between the corrected decoupled scheme \eqref{(6)},\eqref{(c)} and the decoupled scheme \eqref{(6)}--\eqref{(7)}. The spatial  domain is $\Omega=(-7/2,7/2)\times(-35,35) $ and is discretized via  a uniform mesh of $12250$ control volumes, whereas the final time is $T_f=150$. We adopt the following parameters used in \cite{Myerscough1}: $\mu=0.25$ and $a=2$, and we consider the following initial condition:

\begin{equation}
  u_0(x) = \left\{\begin{array}{ll}
1+\epsilon(x) &\quad\mbox{if } x\in (-9/2,9/2)\times(-1,1),\\ 
 1 &\quad\mbox{otherwise, with  } x \in \Omega,
  \end{array}\right. \\
  \label{(init)}
  \end{equation}
  where $\epsilon(x)$ is a positive perturbation  which is constant on each control volume. It is equal on each cell of the mesh  to the average of ten uniformly distributed random values in $[0,1]$.
  
   Since the exact  cell density solution $u$ of the studied system \eqref{(2)}--\eqref{(3)} is unavailable, we compute a reference solution by the proposed corrected decoupled scheme  on a very fine  time-stepping $\Delta t=10^{-3}$, so the number of time-steps needed to reach the final time $T_f$ is $150,000$. We then use the obtained reference solution to compute the relative $L^2$-errors for the two schemes  \eqref{(6)},\eqref{(c)} and  \eqref{(6)}--\eqref{(7)}. These errors are presented in Table \ref{tab.1}. From this table, we can see that both schemes  are first-order accurate and stable for any  time step $\Delta t$ used in this test. However, the corrected  decoupled scheme is about three to five times more accurate than the decoupled scheme \eqref{(6)}--\eqref{(7)}.

\begin{table}[h!]
\begin{center}
\begin{tabular}{l c c c c} 
\hline
\\
    {$\Delta t$ } & $L^2$-error & Rate & $L^2$-error & Rate \\
                  & coorected decoupled  &       & decoupled &         \\
\hline  
\\  
     {$5$} &  $1.320 \times 10^{-1}$ & --- & $4.042 \times 10^{-1}$ & --- \\
     {$1$} & $2.923 \times 10^{-2}$ & $0.937$ & $1.435 \times 10^{-1}$ & $0.643$  \\ 
     {$5.10^{-1}$} & $1.703 \times 10^{-2}$ & $0.779$ & $7.767 \times 10^{-2}$ & $0.886$ \\ 
     {$10^{-1}$} & $3.817 \times 10^{-3}$ & $0.929$ & $1.630 \times 10^{-2}$ & $0.970$ \\
     {$5.10^{-2}$} & $1.918 \times 10^{-3}$ & $0.993$ & $8.205 \times 10^{-3} $ & $0.990$ \\
     {$10^{-2}$} &  $3.566 \times 10^{-4}$ & $1.05$ & $1.672 \times 10^{-3}$ & $0.988$ \\
\hline     
\end{tabular}
\caption{  Relative $L^2$-errors and time convergence orders obtained for $(u)$  using the corrected decoupled  scheme \eqref{(6)},\eqref{(c)} and  the decoupled scheme \eqref{(6)}--\eqref{(7)}.  }
\label{tab.1}
\end{center}
\end{table}

The initial condition \eqref{(init)} and  the reference solution at final time $T_f$ are plotted in \figref{Fig.1}. The presented figure demonstrates the ability of the model \eqref{(2)}--\eqref{(3)} to generate stripe patterns. Three-dimensional plots of the cell density using both studied schemes at final time with $\Delta t=10^{-2}$ are presented in  \figref{Fig.2}. Finally, in \figref{Fig.3}, we plot the contour along the line $L=\lbrace0\rbrace\times(-35,35)$ of the reference solution and of the solutions computed using both schemes with $\Delta t=1$. We can see from this figure that even for a large time-step  size, the corrected decoupled scheme is in close agreement with the reference solution, which is not the case of the scheme \eqref{(6)}--\eqref{(7)}.

\begin{figure}[h!]
\subfigure{\includegraphics[width=6.cm]{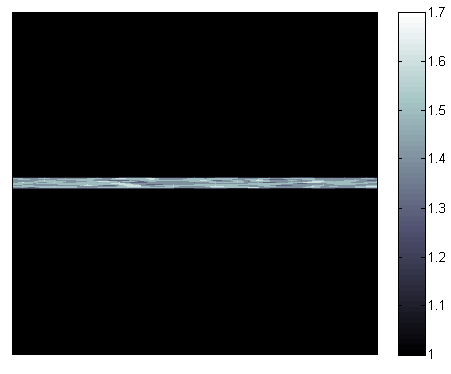}}
\subfigure{\includegraphics[width=6.cm]{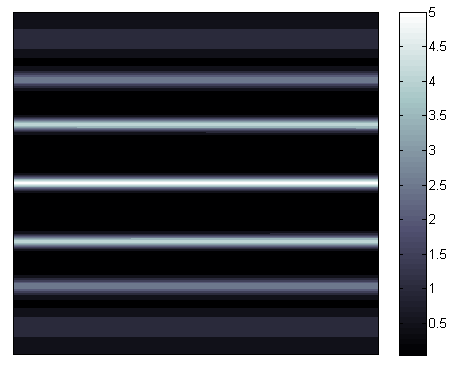}}
\caption{Initial cell density \eqref{(init)} (left)  and cell density $(u)$ at final time $T_f$  computed via the corrected decoupled scheme \eqref{(6)},\eqref{(c)}  with $\Delta t=10^{-2}$ (right).}
\label{Fig.1}
\end{figure}

\begin{figure}[h!]
\subfigure{\includegraphics[width=6.cm]{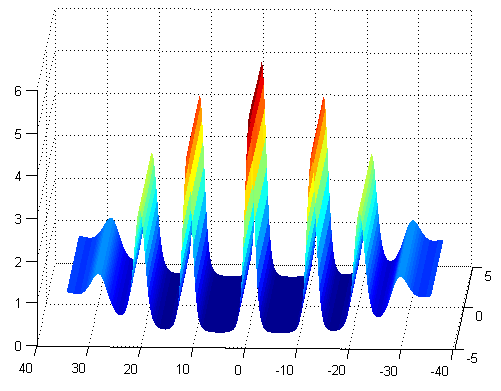}}
\subfigure{\includegraphics[width=6.cm]{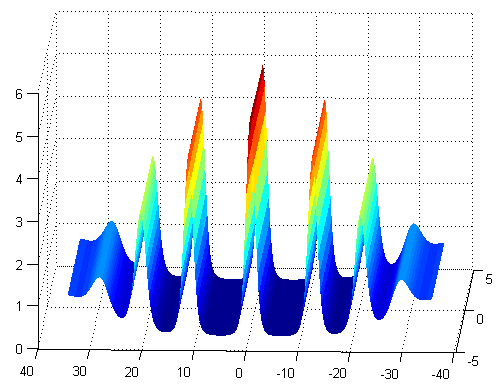}}
\caption{Three-dimensional plot of the cell density $(u)$ at final time $T_f$  computed via the corrected decoupled scheme \eqref{(6)},\eqref{(c)} (left) and the decoupled scheme \eqref{(6)}--\eqref{(7)} (right) with $\Delta t=10^{-2}$. }
\label{Fig.2}
\end{figure}

\begin{figure}[h!]
\begin{center}
\includegraphics[width=1\textwidth,height=0.4\textheight]{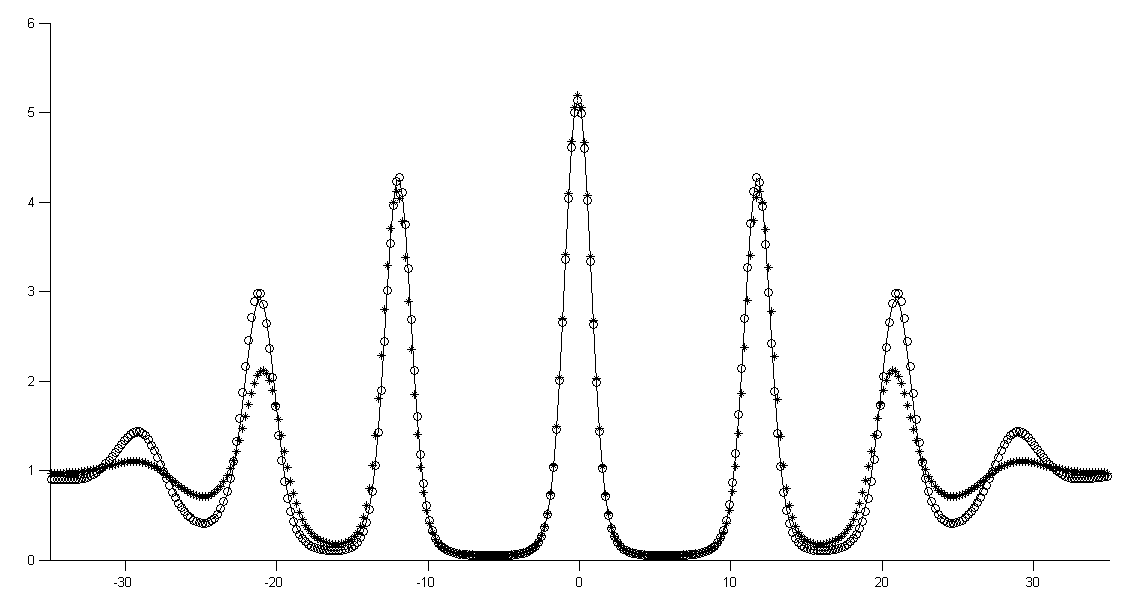}
\caption{Contours along the line $L$ of the solution $(u)$ at final time $T_f$; solid line reference solution, 
$\circ$ solution  computed via the corrected decoupled scheme \eqref{(6)},\eqref{(c)} with $\Delta t=1$, $*$  solution  computed via the decoupled scheme \eqref{(6)}--\eqref{(7)} with $\Delta t=1$.}
\label{Fig.3}
\end{center}
\end{figure}

\textit{Test 2.} The purpose of this test is to investigate the efficiency of the corrected decoupled scheme in the case of the parabolic-parabolic version of the model \eqref{(2)}--\eqref{(3)}.

We consider the same data used in the previous test. Nevertheless, we need to define an initial condition for the concentration of chemoattractant, we take then $c(x,0)=1/32$. The schemes adopted are similar to those of the previous test, we only need to replace the equation \eqref{(7)} in the scheme \eqref{(6)}--\eqref{(7)} by
\begin{equation}
 \m(K)\frac{c^{n+1}_K-c^n_K}{\Delta t}-\sum_{\sigma\in\E_K}\tau_\sigma\, Dc^{n+1}_{K,\sigma}
  +\m(K)\, c_K^{n+1}= \m(K)\, \frac{u_K^{n}}{u_K^{n}+1},
  \label{T1}
\end{equation}

and we substitute the equation \eqref{(c)} in the scheme \eqref{(6)},\eqref{(c)} by (we recall that $\beta_n=1$)

\begin{equation}
  \m(K)\frac{c^{n+1}_K-c^n_K}{\Delta t}-\sum_{\sigma\in\E_K}\tau_\sigma\, Dc^{n+1}_{K,\sigma}
  +\m(K)\, c_K^{n+1}= \m(K)\, \frac{u_K^{n}}{u_K^{n}+1}+ T_K^{n}.
 \label{T2} 
\end{equation}  
The following decoupled scheme is also investigated in this test:
\begin{align}
& \m(K)\frac{u^{n+1}_K-u^n_K}{\Delta t}
  - \mu \sum_{\sigma\in\E_K}\tau_\sigma Du_{K,\sigma}^{n+1} \notag
  \\
  &+a \sum_{\substack{\sigma\in\E_{K}\\ \sigma=K|L}}\tau_\sigma\left(S\left( Dc_{K,\sigma}^{n}\right)u_{K}^{n+1}-S\left( -Dc_{K,\sigma}^{n}\right)u_{L}^{n+1}\right)=0, \label{T3}
 \\
  &  \m(K)\frac{c^{n+1}_K-c^n_K}{\Delta t}-\sum_{\sigma\in\E_K}\tau_\sigma\, Dc^{n+1}_{K,\sigma}
  +\m(K)\, c_K^{n+1}= \m(K)\, \frac{u_K^{n+1}}{u_K^{n+1}+1}, \label{T4}
\end{align}
such scheme is used for example in \cite{Strehl1}.

The reference solution is computed similarly to the previous test. We observe from Table \ref{tab.2} that the scheme   \eqref{T3}--\eqref{T4} is the less accurate one. We can see also that  the corrected decoupled scheme    \eqref{(6)},\eqref{T2} is about four to five times more accurate than the scheme \eqref{(6)},\eqref{T1}.

\begin{table}[h!]
\begin{center}
\begin{tabular}{l c c c c c c} 
\hline
\\
    {$\Delta t$ } & $L^2$-error & Rate & $L^2$-error & Rate & $L^2$-error & Rate \\
                  & coorected decoupled  &       &  \eqref{(6)},\eqref{T1} &        &  \eqref{T3}--\eqref{T4} &         \\
\hline  
\\  
     {$5$} & $8.450\times 10^{-2}$ & --- & $3.775 \times 10^{-1}$ & --- &$4.231 \times 10^{-1}$ & --- \\
     {$1$} & $2.323\times 10^{-2}$ & $0.802$ & $1.117 \times 10^{-1}$ &  $0.756$ & $1.243 \times 10^{-1}$& $0.761$ \\ 
     {$5.10^{-1}$} & $1.344 \times 10^{-2}$ & $ 0.790$ & $6.109\times 10^{-2}$ & $0.871$ & $6.846 \times 10^{-2}$ &$0.861$ \\ 
     {$10^{-1}$} & $2.971\times10^{-3}$ & $0.938$ & $1.314\times 10^{-2}$ & $0.955$ & $1.477 \times 10^{-2}$&$0.953$ \\
     {$5.10^{-2}$} & $1.490\times 10^{-3}$ & $0.996$ & $6.634\times 10^{-3}$ & $0.986$ &$7.456\times 10^{-3}$ &$0.986$ \\
     {$10^{-2}$} & $2.765 \times 10^{-4}$ & $ 1.05$ & $1.354\times 10^{-3}$ & $ 0.987$ &$1.519\times 10^{-3}$ & $0.988$ \\
\hline     
\end{tabular}
\caption{  Relative $L^2$-errors and time convergence orders obtained for $(u)$  using the corrected decoupled  scheme \eqref{(6)},\eqref{T2}, the decoupled scheme \eqref{(6)},\eqref{T1}, and the decoupled scheme \eqref{T3}--\eqref{T4}.  }
\label{tab.2}
\end{center}
\end{table}
 
 In  \figref{Fig.4}, we show the evolution of the cell density by plotting the contour of the reference solution along the line $L=\lbrace0\rbrace\times(-35,35)$ at different times.

\begin{figure}[h!]
\begin{minipage}[t]{0.45\linewidth}
\includegraphics[width=\textwidth]{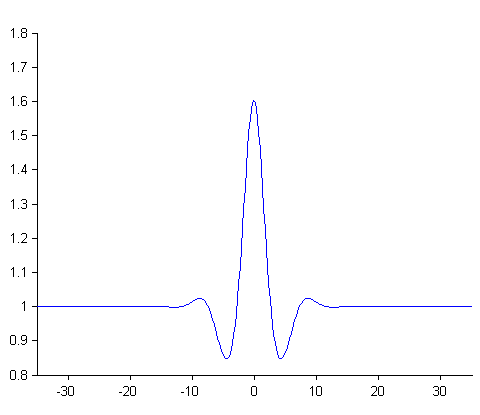}
\hspace{0.1cm}
\end{minipage}
\hspace{\fill}
\begin{minipage}[t]{0.45\linewidth}
\includegraphics[width=\textwidth]{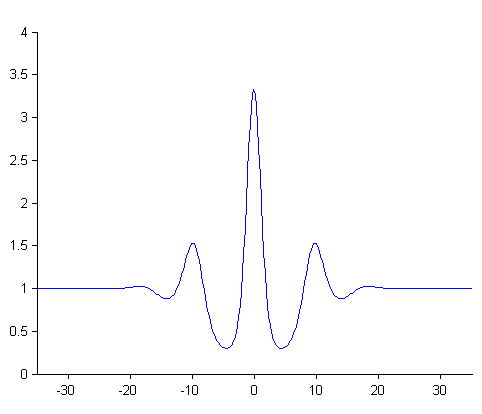}
\end{minipage}
\vspace*{0 cm}
\begin{minipage}[t]{0.45\linewidth}
\includegraphics[width=\textwidth]{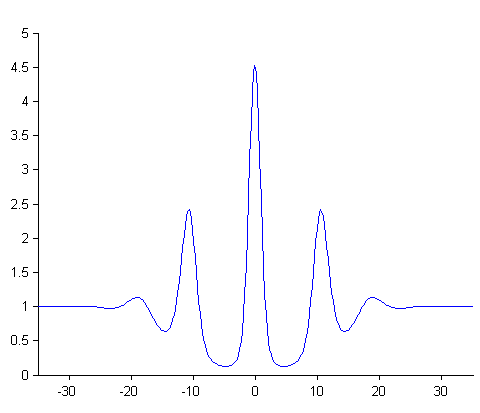}
\hspace{0.1cm}
\end{minipage}
\hspace{\fill}
\begin{minipage}[t]{0.45\linewidth}
\includegraphics[width=\textwidth]{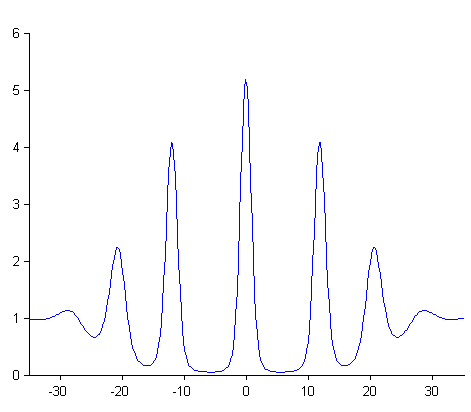}
\end{minipage}
\caption{Contours along the line $L$ of the reference solution $(u)$     at $ t=25$ (left top), $ t=75$ (right top), $ t=100$ (left bottom), $T_f$ (right bottom). }
\label{Fig.4}
\end{figure}

\textit{Test 3.} The purpose of this test is to study the accuracy of our corrected decoupled scheme when the source term in the equation for concentration $c$ is linear. To this end,  we consider the following chemotaxis-growth model  for bacterial pattern formation \cite{Aida1}:

\begin{equation}
\left\{
\begin{aligned}
 &\partial_t u = \mu \Delta u - \chi \nabla \cdot (  \, u \nabla c)+ f(u)\quad \text{in}\ \Omega \times (0,T_f) ,
 \\
&\partial_t c= \Delta c -\gamma c +u  \quad \text{in}\ \Omega \times (0,T_f),
 \end{aligned}
 \right.
 \label{S1}
\end{equation}
endowed with the homogeneous Neumann boundary conditions. We consider the following data: $\mu=0.0625$, $\chi=6$, $\gamma=16$, $f(u)=2u(1-u)$ and $T_f=30$. The computational domain is the square  $\Omega=(-8,8)^2$ discretized with a uniform mesh grid  $100 \times 100$. For all $x \in \Omega$, we choose the following initial conditions:
\begin{equation*}
  u(x,0) = \left\{\begin{array}{ll}
1+\epsilon(x) &\quad\mbox{if } \|x\|_2<0.7,\\ 
 1 &\quad\mbox{otherwise},
  \end{array}\right. \\
\end{equation*}
and  $c(x,0)=1/32$. The random perturbation $\epsilon(x)$ is defined as in \eqref{(init)}.

In this test, we compare the numerical results obtained from the decoupled scheme:
\begin{align}
& \m(K)\frac{u^{n+1}_K-u^n_K}{\Delta t}
  - \mu \sum_{\sigma\in\E_K}\tau_\sigma Du_{K,\sigma}^{n+1} \notag
  \\
  &+\chi \sum_{\substack{\sigma\in\E_{K}\\ \sigma=K|L}}\tau_\sigma\left(S\left( Dc_{K,\sigma}^{n+1}\right)u_{K}^{n+1}-S\left( -Dc_{K,\sigma}^{n+1}\right)u_{L}^{n+1}\right)-2\m(K)\,u_K^{n}\left(1-u_K^{n+1}\right)=0, \label{t1}
 \\
  &\m(K)\frac{c^{n+1}_K-c^n_K}{\Delta t} -\sum_{\sigma\in\E_K}\tau_\sigma\, Dc^{n+1}_{K,\sigma}
  +\gamma \m(K)\, c_K^{n+1}= \m(K)\, u_K^{n}, \label{t2}
\end{align}
with those of the corrected decoupled scheme, consisting of \eqref{t1} and the following equation
\begin{equation}
  \m(K)\frac{c^{n+1}_K-c^n_K}{\Delta t}-\sum_{\sigma\in\E_K}\tau_\sigma\, Dc^{n+1}_{K,\sigma}
  +\gamma \m(K)\, c_K^{n+1}= \m(K)\, u_K^{n}+ T_K^{n}.
 \label{t3} 
\end{equation} 
where for all $K\in \T$ and $n=1,...,N-1$  
 
\begin{equation*}
T_K^{n}=\m(K)\left(u_K^{n}-u_K^{n-1}\right),\quad T_K^{0}=0 . 
 \end{equation*}

The reference solution is computed by the corrected decoupled scheme using a very fine time-step size $\Delta t=10^{-4}$. The results presented in Table  \ref{tab.3} show that the corrected decoupled scheme is  highly accurate compared to the scheme \eqref{t1}--\eqref{t2}. In the case when $\Delta t=10^{-3}$, the  corrected decoupled scheme is about $32$ times more accurate than the scheme \eqref{t1}--\eqref{t2}.

\begin{table}[h!]
\begin{center}
\begin{tabular}{l c c c c} 
\hline
\\
    {$\Delta t$ } & $L^2$-error & Rate & $L^2$-error & Rate \\
                  & coorected decoupled  &       & decoupled &         \\
\hline  
\\   
     {$5.10^{-1}$} & $4.216 \times 10^{-3}$ & --- & $2.234\times 10^{-2}$ & --- \\ 
     {$10^{-3}$} & $6.022\times10^{-4}$ & $1.21$ & $1.119\times 10^{-2}$ & $ 0.430$ \\
     {$5.10^{-2}$} & $2.947\times 10^{-4}$ & $1.03$ & $6.806\times 10^{-3}$ & $ 0.717$ \\
     {$10^{-2}$} & $5.863\times 10^{-5}$ & $ 1.00$ & $1.636\times 10^{-3}$ & $ 0.886$ \\
     {$5.10^{-3}$} & $2.907\times 10^{-5}$ & $1.01$ & $8.385\times 10^{-4}$ & $0.964$ \\
     {$10^{-3}$} & $5.347\times 10^{-6}$ & $1.05$ & $ 1.707\times 10^{-4}$ & $0.989$ \\
\hline     
\end{tabular}
\caption{  Relative $L^2$-errors and time convergence orders obtained for $(u)$  using the corrected decoupled  scheme \eqref{t1},\eqref{t3} and  the decoupled scheme \eqref{t1}--\eqref{t2}.  }
\label{tab.3}
\end{center}
\end{table}

 The numerical cell density $u$ of the model computed using both schemes with $\Delta t=10^{-3}$ is shown in \figref{Fig.5}. As we can see, the solution forms  periodic  arrays of continuous rings which match well with  the patterns formed by \textit{Salmonella typhimurium} \cite{Woodward1}.

\begin{figure}[h!]
\subfigure{\includegraphics[width=6.cm]{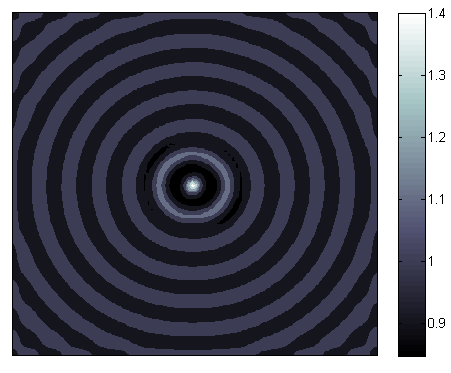}}
\subfigure{\includegraphics[width=6.cm]{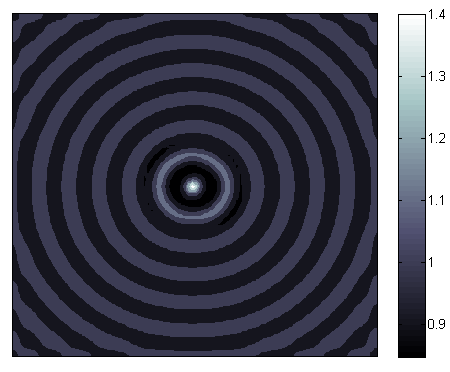}}
\caption{Cell density $(u)$ at final time $T_f$  computed via the corrected decoupled scheme \eqref{t1},\eqref{t3} (left) and the decoupled scheme \eqref{t1}--\eqref{t2} (right) with $\Delta t=10^{-3}$. }
\label{Fig.5}
\end{figure}

\textit{Test 4.} In this test, we present some numerical simulations which illustrate the ability of the presented corrected decoupled finite volume scheme to capture different forms of bacterial spatial patters. For this purpose, we consider the chemotaxis model \eqref{S1} with the  following data used in \cite{Aida1} : $\mu=0.0625$, $\gamma=32$ and $f(u)=u^2\left(1-u \right)$. The domain is the square  $\Omega=(-10,10)^2$, which is discretized via a uniform mesh grid  $150 \times 150$, and the time-step used is $\Delta t=10^{-1}$. For the final time, we take $T_f=150$ and we consider the following initial conditions 
\begin{equation*}
  u(x,0) = \left\{\begin{array}{ll}
1+\epsilon(x) &\quad\mbox{if } \|x\|_2<1,\\ 
 1 &\quad\mbox{otherwise},
  \end{array}\right. \\
\end{equation*}
and  $c(x,0)=1/32$. 

The corrected decoupled scheme used is similar to that of the previous section. However, since the logistic source $f(u)$ has now a cubic form, we use the following linearized finite volume  discretization:

\begin{align*}
 \int_K  u(x,t)^2\left(1-u(x,t) \right)\,dx \approx  
  \m(K)\,u_K^{n+1}u_K^{n}\left(1-u_K^{n}\right).
\end{align*} 

For $\chi=80$, the computed solution at $t=30$ is shown in  \figref{Fig.6}. We observe from \figref{Fig.6} (left) the formation of symmetrical spots in whole domain. The 3D view of this patterning is presented in \figref{Fig.6} (right).

\begin{figure}[h!]
\subfigure{\includegraphics[width=6.cm]{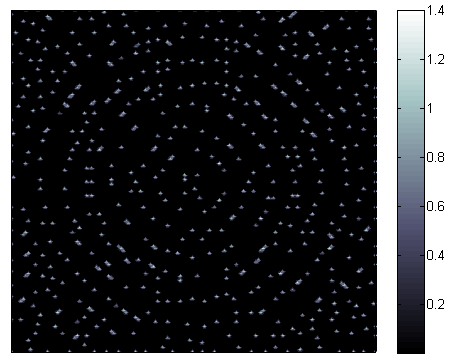}}
\subfigure{\includegraphics[width=6.cm]{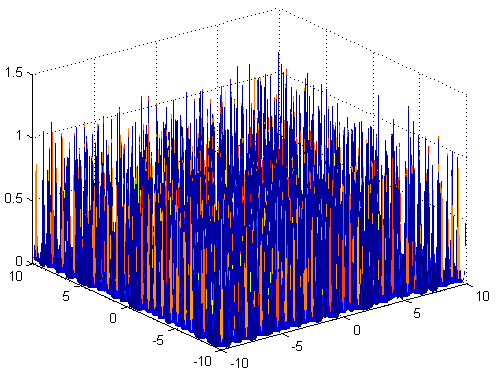}}
\caption{Cell density $(u)$ for $\chi=80$ at  $t=30$  computed via the corrected decoupled. }
\label{Fig.6}
\end{figure}

In \figref{Fig.7}, we examine the effect of the parameter $\chi$ on the numerical solution. When $\chi=6$, a honeycomb pattern is observed. Then, the solution changes its structure to continuous rings  for $\chi=7.4$. When $\chi=20$, chaotic spots appear. The symmetry of these spots increases for high values of $\chi$ (see \figref{Fig.7} (right bottom)). These symmetric spots  seem in good agreement with the \textit{Escherichia coli} patterns reported in \cite{budrene2}.

\begin{figure}[h!]
\begin{minipage}[t]{0.45\linewidth}
\includegraphics[width=\textwidth]{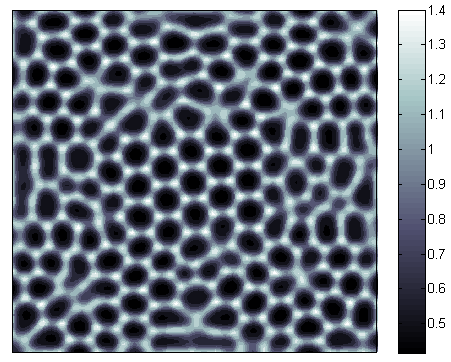}
\hspace{0.1cm}
\end{minipage}
\hspace{\fill}
\begin{minipage}[t]{0.45\linewidth}
\includegraphics[width=\textwidth]{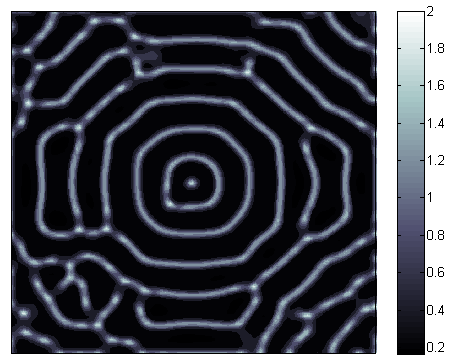}
\end{minipage}
\vspace*{0 cm}
\begin{minipage}[t]{0.45\linewidth}
\includegraphics[width=\textwidth]{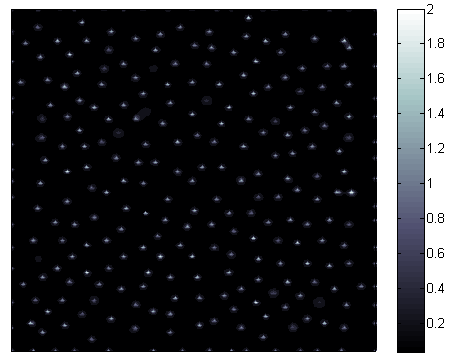}
\hspace{0.1cm}
\end{minipage}
\hspace{\fill}
\begin{minipage}[t]{0.45\linewidth}
\includegraphics[width=\textwidth]{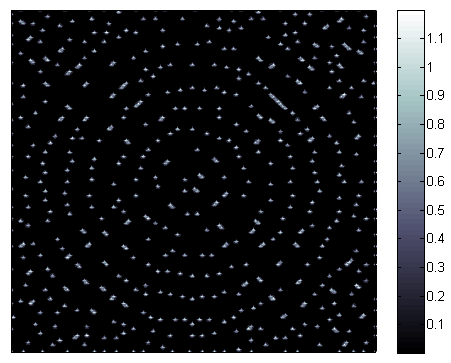}
\end{minipage}
\caption{Cell density $(u)$ computed via the corrected decoupled  scheme at final time $T_f$, for $ \chi=6$ (left top), $ \chi=7.4$ (right top), $ \chi=20$ (left bottom), $\chi=70$ (right bottom). }
\label{Fig.7}
\end{figure}

\section{Conclusion}
In this paper, a decoupled scheme for solving  chemotaxis problems is developed. Decoupled schemes are known to be very advantageous in terms of computational cost in comparison to coupled ones, however the major disadvantage of such schemes is their lack of accuracy. The proposed approximation is based on a classical decoupled scheme, which is improved by adding a suitable correction term. This approach does not affect the computational speed of the scheme and is easy to implement. Moreover, the numerical results presented show that our approach is much more accurate than usual decoupled schemes. The question is now to know how we can develop the idea of the scheme to deal with other systems of partial differential equations. This may represent an interesting topic for further research.

\section*{References}

\end{document}